\tikzset{Bullet/.style={fill=black,draw,color=#1,circle,minimum size=2pt,scale=0.45}}
\theoremstyle{plain}
\newtheorem{theorem}{Theorem}[section]
\newtheorem{lemma}[theorem]{Lemma}
\theoremstyle{definition}
\newtheorem{definition}[theorem]{Definition}
\newtheorem{remark}[theorem]{Remark}
\newtheorem{counter example}[theorem]{Counter Example}
\newtheorem{corollary}[theorem]{Corollary}
\newtheorem{example}[theorem]{Example}
\numberwithin{equation}{section}
\begin{document}
	\Large{\title{More on generalizations of topology of uniform convergence and $m$-topology on $C(X)$}
	\author[P. Nandi]{Pratip Nandi}
	\address{Department of Pure Mathematics, University of Calcutta, 35, Ballygunge Circular Road, Kolkata 700019, West Bengal, India}
	\email{pratipnandi10@gmail.com}
	\author[R. Bharati]{Rakesh Bharati}
	\address{Department of Pure Mathematics, University of Calcutta, 35, Ballygunge Circular Road, Kolkata 700019, West Bengal, India}
	\email{rbharati.rs@gmail.com}
	\author[A. Deb Ray]{Atasi Deb Ray}
	\address{Department of Pure Mathematics, University of Calcutta, 35, Ballygunge Circular Road, Kolkata 700019, West Bengal, India}
	\email{debrayatasi@gmail.com}
	\author[S.K. Acharyya]{Sudip Kumar Acharyya}
	\address{Department of Pure Mathematics, University of Calcutta, 35, Ballygunge Circular Road, Kolkata 700019, West Bengal, India}
	\email{sdpacharyya@gmail.com}
		
	\thanks{The first author thanks the CSIR, New Delhi – 110001, India, for financial support.}
	\thanks{The second author acknowledges financial support from University Grants Commission, New Delhi, for the award of research fellowship (F. No. 16-9(June 2018)/2019(NET/CSIR))}
		
	\begin{abstract}
	This paper conglomerates our findings on the space $C(X)$ of all real valued continuous functions, under different generalizations of the topology of uniform convergence and the $m$-topology. The paper begins with answering all the questions which were left open in our previous paper on the classifications of $Z$-ideals of $C(X)$ induced by the $U_I$ and the $m_I$-topologies on $C(X)$. Motivated by the definition of $m^I$-topology, another generalization of the topology of uniform convergence, called $U^I$-topology, is introduced here. Among several other results, it is established that for a convex ideal $I$, a necessary and sufficient condition for $U^I$-topology to coincide with $m^I$-topology is the boundedness of $X\setminus\bigcap Z[I]$ in $X$. As opposed to the case of the $U_I$-topologies (and $m_I$-topologies), it is proved that each $U^I$-topology (respectively, $m^I$-topology) on $C(X)$ is uniquely determined by the ideal $I$. In the last section, the denseness of the set of units of $C(X)$ in $C_U(X)$ (= $C(X)$ with the topology of uniform convergence) is shown to be equivalent to the strong zero dimensionality of the space $X$. Also, the space $X$ is a weakly P-space if and only if the set of zero divisors (including 0) in $C(X)$ is closed in $C_U(X)$. Computing the closure of $C_\mathscr{P}(X)$ (=$\{f\in C(X):\text{the support of }f\in\mathscr{P}\}$ where $\mathscr{P}$ denotes the ideal of closed sets in $X$) in $C_U(X)$ and $C_m(X)$ (= $C(X)$ with the $m$-topology), the results $cl_UC_\mathscr{P}(X) = C_\infty^\mathscr{P}(X)$ ($=\{f\in C(X):\forall n\in\mathbb{N}, \{x\in X:|f(x)|\geq\frac{1}{n}\}\in\mathscr{P}\}$) and  $cl_mC_\mathscr{P}(X)=\{f\in C(X):f.g\in C^\mathscr{P}_\infty(X)\text{ for each }g\in C(X)\}$ are achieved.
	\end{abstract}

	\subjclass[2020]{Primary 54C40; Secondary 46E30}
	\keywords{Topology of uniform convergence, $m$-topology, $U_I$, $m_I$, $U^I$ and $m^I$ topologies,  weakly P-space, strongly 0-dimensional space, $C_\mathscr{P}(X)$, $C_\infty^\mathscr{P}(X)$.}                                    
	
	\maketitle
	
	\section{Introduction}
	In the entire article $X$ designates a completely regular Hausdorff space. As is well known $C(X)$ stands for the ring of real valued continuous functions on $X$. Suppose $C^*(X)=\{f\in C(X):f\text{ is bounded on }X\}$. If for $f\in C(X)$ and $\epsilon>0$ in $\mathbb{R}$, $U(f,\epsilon)=\{g\in C(X):\sup\limits_{x\in X}|f(x)-g(x)|<\epsilon\}$, then the family $\{U(f,\epsilon):f\in C(X),\epsilon>0\}$ turns out to be an open base for the so-called topology of uniform convergence or in brief the $U$-topology on $C(X)$. Several experts have studied $U$-topology on $C(X)$, from various points of view. One can look at the articles [\cite{Gillman},~\cite{McCoy},~\cite{Plank1}] for a glimpse of some relevant facts about this topology. A generalization of this $U$-topology on $C(X)$ via a kind of ideal in $C(X)$, viz a $Z$-ideal $I$ in $C(X)$, is already studied only recently~\cite{Bharati}. Incidentally the collection $\{U_I(f,\epsilon):f\in C(X),\epsilon>0\}$ constitutes an open base for this generalized $U$-topology, named as the $U_I$-topology on $C(X)$. Here $U_I(f,\epsilon)=\{g\in C(X):\text{there exists }Z\in Z[I]\equiv\{Z(h):h\in I\}\text{ such that }\sup\limits_{x\in Z}|f(x)-g(x)|<\epsilon\}$, $Z(h)$ standing for the zero set of the function $h$. It is worth mentioning in this context that an analogous type of topology, viz the $m_I$-topology, on $C(X)$ is introduced and investigated in some detail in~\cite{Azarpanah2}. Here $I$ is a $Z$-ideal in $C(X)$ and a typical basic open neighborhood of $f\in C(X)$ in this topology looks like: $m_I(f,u)=\{g\in C(X):|f(x)-g(x)|<u(x)\text{ for all }x\in Z\text{ for some }Z\in Z[I]\}$, here $u\in C(X)$ and is strictly positive on some $Z_0\in Z[I]$. With the special choice $I=(0)$, the $m_I$-topology and $U_I$-topology reduce respectively to the well-known $m$-topology and $U$-topology on $C(X)$ [2M, 2N~\cite{Gillman}]. In section $4$ of the article~\cite{Bharati}, two classifications of $Z$-ideals in $C(X)$ induced by $U_I$-topologies and also by the $m_I$-topologies are defined. To be more specific binary relations `$\sim$' and `$\approx$' on the set $\mathcal{I}$ of all $Z$-ideals in $C(X)$ are introduced as follows: for $I,J\in\mathcal{I},\ I\sim J$ if $U_I$-topology $=U_J$-topology and $I\approx J$ if $m_I$-topology $=m_J$-topology. For $I\in\mathcal{I}$, we set $[I]=\{J\in\mathcal{I}:U_I\text{-topology }=U_J \text{-topology}\}$ and $[[I]]=\{J\in\mathcal{I}:m_I\text{-topology }=m_J \text{-topology}\}$. It is established in~\cite{Bharati}, [Theorem $4.1$ and Theorem $4.4$] that each equivalence class $[I]$ has a largest member and analogously all the equivalence classes $[[I]]$ also have largest members [Theorem $4.13$ and Theorem $4.20$,~\cite{Bharati}]. It is further realized that some of these equivalence classes (in both these classifications of $Z$-ideals in $C(X)$) have smallest members too [Theorem $4.10$, Theorem $4.21$,~\cite{Bharati}].\\
	In section~\ref{Sec2} of the present article we prove that each equivalence class $[I]$ and $[[I]]$ has a smallest member, thereby answering the questions $4.26$ and $4.27$ asked in~\cite{Bharati} affirmatively. Again it was established in~\cite{Bharati} that if $X$ is a $P$-space, then each equivalence class $[I]$ and $[[I]]$ degenerates into singleton [Theorem $4.12$ and Theorem $4.23$ in~\cite{Bharati}] and hence $\sim$ and $\approx$ are identical equivalence relations on $\mathcal{I}$. In this article we check that, regardless of whether or not $X$ is a $P$-space, $\sim$ and $\approx$ are indeed identical equivalence relations on $\mathcal{I}$, the set of all $Z$-ideals on $C(X)$. This answers negatively the question $4.25$ asked in~\cite{Bharati}.\\
	In section~\ref{Sec3} of this article we introduce yet another generalization of $U$-topology on $C(X)$, this time via an ideal $I$ of $C(X)$ [not necessarily a $Z$-ideal nor even a proper ideal] but with a slightly different technique. Essentially for $f\in C(X)$ and $\epsilon>0$, we set $\widetilde{B}(f,I,\epsilon)=\{g\in C(X):\sup\limits_{x\in X}|f(x)-g(x)|<\epsilon\text{ and }f-g\in I\}$. Then it needs a few routine computation to show that the family $\{\widetilde{B}(f,I,\epsilon):f\in C(X), \epsilon>0\}$ makes an open base for some topology on $C(X)$, which we designate by the $U^I$-topology on $C(X)$. It is not at all hard to check that $C(X)$ with this $U^I$-topology is an additive topological group. The $U$-topology on $C(X)$ is a special case of the $U^I$-topology with $I=C(X)$. Let us mention at this point that an analogous kind of topology, viz the $m^I$-topology on $C(X)$, is initiated and studied in some details in~\cite{Azarpanah}. A typical basic open neighborhood of $f\in C(X)$ for this latter topology is a set of the form $\{g\in C(X):|f(x)-g(x)|<u(x)\text{ for all } x\in X\text{ and }f-g\in I\}$, here $u$ is a positive unit in $C(X)$. $C(X)$ with the $m^I$-topology is a topological ring as is proved in~\cite{Azarpanah}. For notational convenience, we let $C_{U^I}(X)$ to stand for $C(X)$ equipped with the $U^I$-topology. Analogously $C_{m^I}(X)$ designates $C(X)$ with the $m^I$-topology. In general the $U^I$-topology on $C(X)$ is weaker than the $m^I$-topology. Incidentally it is proved [vide Theorem~\ref{Th3.8}] that if $I$ is a convex ideal in $C(X)$ (in particular $I$ may be a $Z$-ideal in $C(X)$), then $U^I$-topology $=m^I$-topology if and only if $X\setminus\bigcap Z[I]$ is a bounded subset of $X$. We observe that $I\cap C^*(X)$ is a clopen set in the space $C_{U^I}(X)$ [Theorem~\ref{Th3.7}(\ref{Th3.7b})]. We use this fact to show that $I\cap C^*(X)$ is indeed the component of $0$ in $C_{U^I}(X)$ [Theorem~\ref{Th3.13}]. We recall that a topological space $Y$ is called homogeneous if given any two points $p,q\in Y$, there exists a homoeomorphism $\phi:Y\to Y$ such that $\phi(p)=q$. A topological group is a natural example of a homogeneous space. It follows that $C_{U^I}(X)$ is either locally compact or nowhere locally compact, indeed the latter criteria holds when and only when $X\setminus \bigcap Z[I]$ is a finite set [Theorem~\ref{Th3.12}] [Compare with Theorem $4.2$ in~\cite{Azarpanah}]. As in the space $C_{m^I}(X)$, ideals in $C(X)$ are never compact in $C_{U^I}(X)$ [Theorem ~\ref{Th3.23}(\ref{Th3.23(1)})] and the ideals contained in the ring $C_\psi(X)$ of all real valued continuous functions with pseudocompact support are the only candidates for Lindel\"{o}f ideals in $C_{U^I}(X)$ [Theorem~\ref{Th3.23}(\ref{Th3.23(2)})]. In~\cite{Bharati}, it is seen that a whole bunch of $Z$-ideals $I$ in $C(X)$, can give rise to identical $U_I$-topologies (respectively identical $m_I$-topologies). In contrast we observe in the present article that $U^I$-topologies on $C(X)$ (respectively $m^I$-topologies on $C(X)$) are uniquely determined by the ideal $I$ in $C(X)$ [Theorem~\ref{Th3.1}].\\
	In section~\ref{Sec4} of the present article on specializing $I=C(X)$ and therefore writing $C_U(X)$ instead of $C_{U^I}(X)$, we achieve characterization of two known classes of topological spaces $X$ viz strongly zero-dimensional space and pseudocompact weakly $P$-space in terms of the behavior of two chosen subsets $U(X)$ and $D(X)$ of the ring $C(X)$, in the space $C_U(X)$ [Theorem~\ref{Th4.2}, Theorem~\ref{Th4.3}]. Here $U(X)$ stands for the set of all units in $C(X)$ and $D(X)$, the collection of all zero-divisors in $C(X)$, including $0$. We further observe that the closure of the ideal $C_K(X)$ of all real valued continuous functions with compact support in the space $C_U(X)$ is precisely the set $\{f\in C(X):f^*(\beta X\setminus X)=\{0\}\}$, here $f^*:\beta X\to \mathbb{R}\cup\{\infty\}$ is the well known Stone-extension of the function $f$. This leads to the fact that the closure of $C_K(X)$ in $C_U(X)$ is the familiar ring $C_\infty(X)$ of all functions in $C(X)$ which vanishes at infinity [Remark~\ref{Rem4.9}]. We would like to point out at this moment, that the same proposition is very much there in the celebrated monograph [Theorem $3.17$,~\cite{Rudin}] but with the additional hypothesis that $X$ is locally compact. We also prove that the closure of the ideal $C_\psi(X)$ of all functions with pseudocompact support in the space $C_U(X)$ equals to the set $\{f\in C(X):f^*(\beta X\setminus\upsilon X)=\{0\}\}$ [Theorem~\ref{Th4.11}]. This ultimately leads to the proposition that the closure of $C_\psi(X)$ in $C_U(X)$ is the ring $C^\psi_\infty(X)=\{f\in C(X):\forall n\in\mathbb{N},\{x\in X:|f(x)|\geq\frac{1}{n}\}\text{ is pseudocompact}\}$. This last ring is called the pseudocompact analogue of the ring $C_\infty(X)$ and is initiated in~\cite{Acharyya}. The closure of $C_K(X)$ is $C_\infty(X)$ and that of $C_\psi(X)$ is $C^\psi_\infty(X)$ (in the space $C_U(X)$). These two apparently distinct facts are put on a common setting in view of the following result, which we establish at the end of this article. If $\mathscr{P}$ is an ideal of closed sets in $X$, in the sense that whenever $E,F\in\mathscr{P}$, then $E\cup F\in\mathscr{P}$ and $E\in\mathscr{P}$ and $C$, a closed set in $X$ with $C\subset E$ implies that $C\in\mathscr{P}$, then set $C_\mathscr{P}(X)=\{f\in C(X):\text{the support of }f\in\mathscr{P}\}$ and $C_\infty^\mathscr{P}(X)=\{f\in C(X):\forall n\in\mathbb{N}, \{x\in X:|f(x)|\geq\frac{1}{n}\}\in\mathscr{P}\}$. It is proved that the closure of $C_\mathscr{P}(X)$ in $C_U(X)$ is $C_\infty^\mathscr{P}(X)$ [Theorem~\ref{Th4.13}(\ref{Th4.13(2)})].\\
	Incidentally we establish a formula for the closure of $C_\mathscr{P}(X)$ in the space $C(X)$ equipped with $m$-topology. In fact we prove that the closure of $C_\mathscr{P}(X)$ in the $m$-topology $\equiv cl_mC_\mathscr{P}(X)=\{f\in C(X):f.g\in C^\mathscr{P}_\infty(X)\text{ for each }g\in C(X)\}$, Theorem~\ref{Th4.14}(\ref{Th4.14(3)}). With the special choice $\mathscr{P}\equiv$ the ideal of all compact sets in $X$ this formula reads $cl_m C_K(X)=\bigcap\limits_{p\in \beta X-X}M^p$. This last result is precisely the Proposition $5.6$~\cite{Azarpanah}. We conclude this article with a characterization of pseudocompact spaces via denseness of ideal $C_\mathscr{P}(X)$ in $C^\mathscr{P}_\infty(X)$ in the $m$-topology.
	\section{Answer to a few open problems concerning $U_I$-topologies and $m_I$-topologies on $C(X)$}\label{Sec2}
	At the very outset we need to explain a few notations. For each point $p\in\beta X,~ M^p=\{f\in C(X): p\in cl_{\beta X}Z(f)\}$, which is a maximal ideal in $C(X)$ and $O^p=\{f\in C(X):cl_{\beta X}Z(f)\text{ is a neighborhood of }p\text{ in the space }\beta X\}$, a well-known $Z$-ideal in $C(X)$. For each subset $A$ of $\beta X$, we prefer to write $M^A$ instead of $\bigcap\limits_{p\in A}M^p$. Analogously we write $O^A=\bigcap\limits_{p\in A} O^p$. We reproduce the following results from~\cite{Bharati}, to make the paper self-contained.
	\begin{theorem}[This is Theorem 4.10 in~\cite{Bharati}]
		If $A$ is a closed subset of $\beta X$, then $[M^A]=\{I\in\mathcal{I}:O^A\subseteq I\subseteq M^A\}$.
	\end{theorem}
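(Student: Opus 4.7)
The plan is to combine the monotonicity of the $U_I$-topology in $I$ with endpoint identifications at $O^A$ and $M^A$.

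Monotonicity is the easy input: if $I\subseteq J$, then $Z[I]\subseteq Z[J]$, each basic neighborhood $U_I(f,\epsilon)$ is contained in $U_J(f,\epsilon)$, and hence the $U_I$-topology refines the $U_J$-topology. Thus, once one knows $U_{O^A}$-topology $=U_{M^A}$-topology, any $Z$-ideal $I$ with $O^A\subseteq I\subseteq M^A$ has its $U_I$-topology squeezed between two equal topologies, forcing $I\in[M^A]$ and giving the inclusion $\{I\in\mathcal{I}:O^A\subseteq I\subseteq M^A\}\subseteq[M^A]$.

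For the reverse inclusion, fix $I\in[M^A]$. The upper bound $I\subseteq M^A$ is immediate from the already established largest-member theorem cited above. For the lower bound $O^A\subseteq I$, I would argue by contradiction: if $f\in O^A\setminus I$, then $A\subseteq\mathrm{int}_{\beta X}\,cl_{\beta X}Z(f)$ supplies a $\beta X$-open neighborhood of $A$ on which $f$ is ``captured'' in $O^A$; together with the $Z$-ideal property of $I$ and the normality of $\beta X$, this should let one exhibit a $U_{M^A}$-basic neighborhood of $0$ that fails to be $U_I$-open, contradicting $U_I=U_{M^A}$.

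The main obstacle is the endpoint identification $U_{O^A}$-topology $=U_{M^A}$-topology. One inclusion of topologies is free from monotonicity. For the other, given $g\in U_{O^A}(f,\epsilon)$ witnessed by some $W\in Z[O^A]$ with slack $\delta>0$ in the sup-estimate, the aim is to show $U_{M^A}(g,\delta)\subseteq U_{O^A}(f,\epsilon)$: if $h\in U_{M^A}(g,\delta)$ is certified by $Z\in Z[M^A]$, one tries $W':=W\cap Z$, for which the triangle inequality gives $\sup_{W'}|f-h|\leq \sup_W|f-g|+\sup_Z|g-h|<(\epsilon-\delta)+\delta=\epsilon$ immediately. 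The delicate point is that $W\cap Z$ need not a priori lie in $Z[O^A]$, because $cl_{\beta X}(W\cap Z)$ need not be a $\beta X$-neighborhood of $A$. Overcoming this requires exploiting $A\subseteq\mathrm{int}_{\beta X}\,cl_{\beta X}W$, the density-type identity $U\cap cl V\subseteq cl(U\cap V)$ valid for open $U$, and a Urysohn-style construction in $\beta X$ that produces an auxiliary function in $O^A$ whose zero set fits around $A$ inside $W\cap Z$. Executing this interpolation so that the resulting zero set is legitimately a member of $Z[O^A]$, and that the $\epsilon$-budget survives the switch of certificates, is the technical heart of the proof.
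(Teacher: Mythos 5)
First, a point of reference: the paper offers no proof of this statement at all --- it is reproduced verbatim from Theorem 4.10 of~\cite{Bharati} purely to keep the article self-contained --- so your proposal has to be judged on its own merits. Your architecture (monotonicity of $I\mapsto U_I$, a squeeze between the endpoints $O^A$ and $M^A$, and a separate trapping argument for the reverse inclusion) is the natural one. The problem is that the step you yourself single out as the technical heart is aimed in a direction in which it cannot be completed. You propose to certify $h\in U_{O^A}(f,\epsilon)$ by a Urysohn-type construction producing a member of $Z[O^A]$ \emph{inside} $W\cap Z$. No such zero set need exist: any zero set $V\subseteq W\cap Z$ satisfies $cl_{\beta X}V\subseteq cl_{\beta X}(W\cap Z)=cl_{\beta X}W\cap cl_{\beta X}Z$, hence $int_{\beta X}\,cl_{\beta X}V\subseteq int_{\beta X}\,cl_{\beta X}(W\cap Z)$, and the latter need not contain $A$, because the certificate $Z\in Z[M^A]$ only guarantees $A\subseteq cl_{\beta X}Z$, not $A\subseteq int_{\beta X}\,cl_{\beta X}Z$. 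So membership of $V$ in $Z[O^A]$ would force $W\cap Z$ itself to be (essentially) in $Z[O^A]$, which is exactly what one does not have.

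The workable move is to \emph{enlarge} the certificate rather than shrink it. With $\sup_{W\cap Z}|f-h|<\epsilon-\delta$, set $G=\{x\in X:|f(x)-h(x)|\leq\epsilon-\tfrac{\delta}{2}\}$ and $F=\{x\in X:|f(x)-h(x)|\geq\epsilon-\tfrac{\delta}{2}\}$. Then $F$ and $W\cap Z$ are disjoint zero sets, hence have disjoint closures in $\beta X$; since $A\subseteq cl_{\beta X}W\cap cl_{\beta X}Z=cl_{\beta X}(W\cap Z)$, this gives $A\subseteq\beta X\setminus cl_{\beta X}F\subseteq cl_{\beta X}(X\setminus F)\subseteq cl_{\beta X}G$, and the middle set is open in $\beta X$, so $cl_{\beta X}G$ is a neighborhood of every point of $A$; thus $G=Z\bigl((|f-h|-(\epsilon-\tfrac{\delta}{2}))\vee 0\bigr)\in Z[O^A]$ while $\sup_G|f-h|\leq\epsilon-\tfrac{\delta}{2}<\epsilon$. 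This closes the endpoint identification. Two further loose ends remain in your reverse inclusion: the upper bound $I\subseteq M^A$ is not immediate from the bare existence of a largest member of $[M^A]$ --- one must identify that largest member as $M^A$ itself, which is the actual content of Theorems 4.1 and 4.4 of~\cite{Bharati}; and the lower bound $O^A\subseteq I$ is only asserted to be achievable (``should let one exhibit''), with no concrete $U_{M^A}$-open set produced that fails to be $U_I$-open. As written, the proposal is a plausible outline whose decisive step, in the form stated, would fail.
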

	\begin{theorem}[This is Theorem 4.21 in~\cite{Bharati}]
		For any closed subset $A$ of $\beta X$, $[[M^A]]= \{I\in\mathcal{I}:O^A\subseteq I\subseteq M^A\}$.
	\end{theorem}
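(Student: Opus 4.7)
My plan is to prove the equality by double inclusion. The upper bound $I \subseteq M^A$ for every $I \in [[M^A]]$ is immediate from Theorem 4.20 of~\cite{Bharati}, which identifies $M^A$ as the largest member of its $m_I$-equivalence class. Two tasks remain: (i) the lower bound $O^A \subseteq I$ for each $I \in [[M^A]]$, and (ii) showing every $Z$-ideal $I$ with $O^A \subseteq I \subseteq M^A$ induces the same $m_I$-topology as $M^A$.

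For (ii), the $Z$-filter containments $Z[O^A] \subseteq Z[I] \subseteq Z[M^A]$ yield $m_{O^A}(f, u) \subseteq m_I(f, u) \subseteq m_{M^A}(f, u)$ on every basic neighborhood, so the $m_I$-topology sits between the $m_{O^A}$-topology (finest) and the $m_{M^A}$-topology (coarsest), and it is enough to show that the two extremes coincide. Since the $m_{O^A}$-topology is automatically finer than the $m_{M^A}$-topology, I would show the reverse: given a basic neighborhood $m_{O^A}(f, u)$ with $u > 0$ on some $Z_0 \in Z[O^A]$, produce $u' \in C(X)$ positive on a zero set in $Z[M^A]$ with $m_{M^A}(f, u') \subseteq m_{O^A}(f, u)$. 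The idea is to exploit that $cl_{\beta X} Z_0$ contains a $\beta X$-neighborhood $V$ of the closed set $A$; by normality of $\beta X$, build a Urysohn function $\phi \in C(\beta X)$ with $\phi = 1$ on $A$ and $\phi = 0$ outside $V$, then take $u'$ to be (a scalar multiple of) $u \cdot \phi|_X$ or an analogous construction. The collapse of $u'$ outside a neighborhood of $A$ forces any $g \in m_{M^A}(f, u')$ to obey $|f - g| < u$ on a zero-set neighborhood of $A$ contained in $Z_0$, which is a member of $Z[O^A]$.

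For (i), given $I \in [[M^A]]$ and $f \in O^A$, I would argue contrapositively: if $f \notin I$, use the $Z$-ideal property of $I$ to get $Z(f) \notin Z[I]$, then construct a basic $m_I$-neighborhood of $0$ that separates $0$ from $f$, while the analogous $m_{M^A}$-neighborhood of $0$ does not (since $Z(f) \in Z[M^A]$), contradicting the topological agreement. The principal obstacle is the construction in (ii): translating a zero set in $Z[M^A]$, where $A$ merely sits in the $\beta X$-closure, into one in $Z[O^A]$, where the $\beta X$-closure is a genuine neighborhood of $A$, requires delicate use of normality of $\beta X$ and complete regularity of $X$ to secure a cushion that preserves positivity and smallness of $u'$ simultaneously.
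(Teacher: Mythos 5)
First, a point of reference: the paper does not prove this statement at all --- it is reproduced verbatim from Theorem 4.21 of~\cite{Bharati} purely to keep the article self-contained --- so there is no in-paper argument to measure yours against; what follows assesses your sketch on its own terms. Your architecture (get $I\subseteq M^A$ from the ``largest member'' theorem, sandwich the $m_I$-topology between the $m_{O^A}$- and $m_{M^A}$-topologies, collapse the extremes, and prove $O^A\subseteq I$ separately) is sensible, and the direction of fineness you assign to the two extremes is correct. But the two decisive steps are named rather than carried out. A smaller issue first: the sandwich does not follow formally from $m_{O^A}(f,u)\subseteq m_I(f,u)\subseteq m_{M^A}(f,u)$, because admissibility of $u$ (strict positivity on some member of $Z[\cdot]$) is a \emph{stronger} demand for the smaller ideal; e.g.\ a $u$ admissible for $I$ need not be admissible for $O^A$. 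This is repairable (replace $u$ by $|u|+|h|$, where $h\in I$ and $u>0$ on $Z(h)$: the new function is positive everywhere, and $|f-g|<|u|+|h|$ on $Z$ forces $|f-g|<u$ on $Z\cap Z(h)$), but the repair is absent.

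The genuine gap in (ii) is the final sentence of your construction. After cutting off by $\phi$, a function $g\in m_{M^A}(f,u')$ gives you $|f-g|<u'\leq u$ on some $Z'\in Z[M^A]$ with $Z'\subseteq Z_0$; the vanishing of $u'$ outside a $\beta X$-neighbourhood of $A$ forces $Z'$ into $Z_0$, but it does \emph{not} upgrade $Z'$ --- whose $\beta X$-closure merely \emph{contains} $A$ --- to a zero set whose $\beta X$-closure is a \emph{neighbourhood} of $A$, which is what membership in $Z[O^A]$ requires. The missing lemma is: if a cozero set $E$ of $X$ contains some $Z'\in Z[M^A]$, then $E$ contains some $Z''\in Z[O^A]$. (Proof: $Z'$ and $X\setminus E$ are disjoint zero sets, hence completely separated, hence have disjoint closures in $\beta X$; so $A\subseteq cl_{\beta X}Z'\subseteq\beta X\setminus cl_{\beta X}(X\setminus E)$, an open set, into which one squeezes a zero set of $\beta X$ whose interior contains $A$ and whose trace on $X$ is the required $Z''$.) Applied to $E=\{x:|f(x)-g(x)|<u(x)\}\supseteq Z'$ this closes your argument, and it is the entire content of the inclusion $\{I:O^A\subseteq I\subseteq M^A\}\subseteq[[M^A]]$; you identify this obstacle but do not cross it. Part (i) is thinner still: from $f\in O^A\setminus I$ you correctly get $Z(f)\notin Z[I]$, but equality of $m_I$- and $m_{M^A}$-topologies is governed by mutual cofinality of the two $z$-filters inside cozero sets, not by membership of individual zero sets, so it is not clear which basic $m_I$-neighbourhood of $0$ is supposed to ``separate $0$ from $f$'' while its $m_{M^A}$-analogue does not; as written this half is a restatement of the goal rather than a proof.
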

	We are going to establish a generalized version of each of the last two Theorems. We need the following subsidiary fact for that purpose.
	\begin{theorem}
		For any subset $A$ of $\beta X$, $M^A=M^{\overline{A}}$, here $\overline{A}=cl_{\beta X}A$
	\end{theorem}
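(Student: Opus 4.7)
The plan is direct and elementary, relying only on unpacking the definition of $M^A$ and using the topological fact that the closure of a set in $\beta X$ is the smallest closed superset.

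First I would rewrite both sides in a more convenient form. By definition, $f \in M^A$ if and only if $f \in M^p$ for every $p \in A$, which by the definition of $M^p$ is equivalent to $A \subseteq cl_{\beta X} Z(f)$. So
\[
M^A = \{f \in C(X) : A \subseteq cl_{\beta X} Z(f)\},
\]
and by the same reasoning,
\[
M^{\overline{A}} = \{f \in C(X) : \overline{A} \subseteq cl_{\beta X} Z(f)\}.
\]

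Now the two containments follow immediately. Since $A \subseteq \overline{A}$, any $f$ satisfying $\overline{A} \subseteq cl_{\beta X} Z(f)$ automatically satisfies $A \subseteq cl_{\beta X} Z(f)$, giving $M^{\overline{A}} \subseteq M^A$. Conversely, if $f \in M^A$, then $A \subseteq cl_{\beta X} Z(f)$; since $cl_{\beta X} Z(f)$ is a closed subset of $\beta X$ and $\overline{A}$ is the smallest closed set in $\beta X$ containing $A$, we conclude $\overline{A} \subseteq cl_{\beta X} Z(f)$, i.e., $f \in M^{\overline{A}}$.

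There is no substantive obstacle here; the only content of the argument is the observation that the condition defining $M^A$ is closure-stable in the index variable $p$, so it extends automatically from $A$ to $\overline{A}$.
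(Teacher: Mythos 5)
Your proof is correct and follows essentially the same route as the paper's: both unpack $f\in M^A$ as $A\subseteq cl_{\beta X}Z(f)$ (the paper cites the Gelfand--Kolmogorov theorem for this identification) and then pass to $\overline{A}\subseteq cl_{\beta X}Z(f)$ using that $cl_{\beta X}Z(f)$ is closed. Nothing is missing.
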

	\begin{proof}
		It is trivial that $M^{\overline{A}}\subseteq M^A$. To prove the reverse containment let $f\in M^A$. This means that for each $p\in A, f\in M^p$. It follows from Gelfand-Kolmogorov Theorem [Theorem $7.3$,~\cite{Gillman}] that $A\subseteq cl_{\beta X}Z(f)$. This implies that $\overline{A}\subseteq cl_{\beta X}Z(f)$. We use Gelfand-Kolmogorov Theorem once again to conclude that $f\in M^{\overline{A}}$. Then $M^A\subseteq M^{\overline{A}}$.
	\end{proof}
	Theorem $2.1$ (respectively Theorem $2.2$) in conjunction with Theorem $2.3$ yields the following two Theorems almost immediately:
	\begin{theorem}
		For any subset $A$ of $\beta X$, $[M^A]=\{I\in\mathcal{I}:O^{\overline{A}}\subseteq I\subseteq M^A\}$.
	\end{theorem}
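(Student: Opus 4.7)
The plan is to reduce the statement to the already-established closed-subset case (Theorem $2.1$) via the identification $M^A = M^{\overline{A}}$ given by Theorem $2.3$.

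First I would note that $\overline{A} = cl_{\beta X}A$ is, by construction, a closed subset of $\beta X$. Therefore Theorem $2.1$ applies to $\overline{A}$ and gives
\[
[M^{\overline{A}}] \;=\; \{I \in \mathcal{I} : O^{\overline{A}} \subseteq I \subseteq M^{\overline{A}}\}.
\]
Next, I would invoke Theorem $2.3$ to rewrite $M^{\overline{A}} = M^A$. Since the equivalence class $[\,\cdot\,]$ depends only on the ideal (two equal ideals generate the same $U_I$-topology and hence the same class), we have $[M^A] = [M^{\overline{A}}]$, and simultaneously the upper bound on the right-hand side can be replaced: $\{I : O^{\overline{A}} \subseteq I \subseteq M^{\overline{A}}\} = \{I : O^{\overline{A}} \subseteq I \subseteq M^A\}$.

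Stringing these equalities together yields
\[
[M^A] \;=\; [M^{\overline{A}}] \;=\; \{I \in \mathcal{I} : O^{\overline{A}} \subseteq I \subseteq M^A\},
\]
which is the claim. There is essentially no obstacle here; the whole content has been absorbed into Theorems $2.1$ and $2.3$, and the present statement is the cosmetic upgrade from ``closed subset'' to ``arbitrary subset'' obtained by passing to the closure. The only minor point worth checking explicitly is that $O^{\overline{A}}$ (and not $O^A$) is the correct lower bound, which is forced by Theorem $2.1$ once we pass to $\overline{A}$ — and indeed in general $O^A \neq O^{\overline{A}}$, so this replacement is genuine.
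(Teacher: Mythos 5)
Your proposal is correct and is exactly the argument the paper intends: the paper states that Theorem 2.4 follows ``almost immediately'' from Theorem 2.1 applied to the closed set $\overline{A}$ together with the identification $M^A=M^{\overline{A}}$ of Theorem 2.3, which is precisely your reduction. Your closing remark that the lower bound must be $O^{\overline{A}}$ rather than $O^A$ is a worthwhile observation, but the proof itself is the same as the paper's.
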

	\begin{theorem}
		If $A\subset\beta X$, then $[[M^A]]=\{I\in\mathcal{I}:O^{\overline{A}}\subseteq I\subseteq M^A\}$.
	\end{theorem}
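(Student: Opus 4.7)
The plan is to reduce Theorem 2.5 to the already known closed-set version, Theorem 2.2, by invoking Theorem 2.3. The key observation is that although $A$ is an arbitrary subset of $\beta X$, its closure $\overline{A} = cl_{\beta X}A$ \emph{is} closed, so Theorem 2.2 applies directly to $\overline{A}$ and gives
\[
[[M^{\overline{A}}]] = \{I \in \mathcal{I}: O^{\overline{A}} \subseteq I \subseteq M^{\overline{A}}\}.
\]

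Next I would use Theorem 2.3, which asserts $M^A = M^{\overline{A}}$. Consequently the equivalence class $[[M^A]]$ (defined via the $m_I$-topology on $C(X)$) coincides, as a set of $Z$-ideals, with $[[M^{\overline{A}}]]$, simply because $[[\cdot]]$ is a function of its representative and equal ideals produce equal $m_I$-topologies. Substituting $M^{\overline{A}} = M^A$ in the description above yields
\[
[[M^A]] = \{I \in \mathcal{I}: O^{\overline{A}} \subseteq I \subseteq M^A\},
\]
which is exactly the desired statement.

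There is no real obstacle here; the whole content lies in Theorems 2.2 and 2.3, and the proof is essentially a two-line substitution. I would keep the write-up short, just recording the application of Theorem 2.2 to the closed set $\overline{A}$ and then rewriting $M^{\overline{A}}$ as $M^A$ using Theorem 2.3.
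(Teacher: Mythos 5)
Your proposal is correct and is exactly the route the paper takes: the authors state that Theorem 2.2 "in conjunction with Theorem 2.3 yields the following two Theorems almost immediately," i.e.\ apply Theorem 2.2 to the closed set $\overline{A}$ and substitute $M^{\overline{A}}=M^{A}$. Nothing is missing.
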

	We want to recall at this moment that given a $Z$-ideal $I$ in $C(X)$, there always exists a set of maximal ideals $\{M^p:p\in A\}$ each containing $I$, $A$ being a suitable subset of $\beta X$ for which we can write: $[I]=[M^A]=[[I]]$ [This is proved in Theorem $4.9$ and Theorem $4.20$ in~\cite{Bharati}]. In view of this fact, we can make the following comments:
	\begin{remark}
		Each equivalence class $[I]$ in the quotient set $\mathcal{I}\slash\sim$ has a largest as well as a smallest member. This answers question $4.26$ raised in~\cite{Bharati} affirmatively.
	\end{remark}
	\begin{remark}
		Each equivalence class $[[I]]$ in the quotient set $\mathcal{I}\slash\approx$ has a largest as well as a smallest member [This answers question $4.27$ in~\cite{Bharati}].
	\end{remark}
	\begin{remark}
		For each $Z$-ideal $I$ in $C(X)$, $[I]=[[I]]$. Essentially this means that $\sim$ and $\approx$ are two identical binary relations on $\mathcal{I}$ [This answers question $4.25$ in \cite{Bharati} negatively].
	\end{remark}
	\section{$U^I$-topologies versus $m^I$-topologies on $C(X)$}\label{Sec3}
	We begin with the following simple result which states that the assignment: $I\to U^I$ is a one-one map.
	\begin{theorem}\label{Th3.1}
		Suppose $I$ and $J$ are two distinct ideals in $C(X)$. Then $U^I$-topology is different from $U^J$-topology.
	\end{theorem}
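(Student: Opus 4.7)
The plan is to argue by contradiction: suppose that the $U^I$-topology coincides with the $U^J$-topology while $I\neq J$. By symmetry I may pick $h\in I\setminus J$. The first preparatory step is to replace $h$ by a bounded representative of $I\setminus J$. Set $k=\dfrac{h}{1+h^{2}}\in C(X)$; clearly $\|k\|_\infty\le\tfrac12$. Because $\tfrac{1}{1+h^{2}}\in C(X)$, the element $k=h\cdot\tfrac{1}{1+h^{2}}$ lies in $I$, and if $k$ belonged to $J$ then $h=k(1+h^{2})$ would also belong to $J$, contradicting the choice of $h$. Hence $k\in I\setminus J$ is bounded.

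The key structural observation is that every ideal $L$ of $C(X)$ is $U^{L}$-open: if $f\in L$ and $g\in\widetilde{B}(f,L,\epsilon)$, then $f-g\in L$, so $g=f-(f-g)\in L$. In particular $J$ is $U^{J}$-open, hence---under the contradiction hypothesis---also $U^{I}$-open. Since $0\in J$, there exists $\epsilon>0$ with the basic neighborhood
\[
\widetilde{B}(0,I,\epsilon)=\{g\in I:\|g\|_\infty<\epsilon\}
\]
contained in $J$.

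To conclude, choose a nonzero real scalar $\alpha$ with $|\alpha|\,\|k\|_\infty<\epsilon$. Then $\alpha k\in I$ (as $I$ is an ideal) and $\|\alpha k\|_\infty<\epsilon$, so $\alpha k\in\widetilde{B}(0,I,\epsilon)\subseteq J$. Because $J$ is an ideal closed under scalar multiplication, $k=\alpha^{-1}(\alpha k)\in J$, contradicting $k\in I\setminus J$. Therefore the $U^{I}$- and $U^{J}$-topologies must in fact be distinct.

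The only genuinely delicate point is the reduction to a \emph{bounded} element of $I\setminus J$; after that, the argument is driven by the uniform-norm bound hard-wired into the $U^{I}$-basic neighborhoods together with the ideal property of $J$ under scalar multiplication. It is precisely these features---absent from the $U_{I}$-topologies discussed in Section~\ref{Sec2}---that are expected to make the assignment $I\mapsto U^{I}$ injective.
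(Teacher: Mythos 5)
Your proof is correct and is essentially the paper's own argument in a lightly repackaged form: both reduce to a bounded element of $I\setminus J$ and then scale it by a small real number so that it enters every basic $U^I$-neighborhood of the chosen center while its difference from the center stays outside $J$. The only cosmetic difference is that you center the argument at $0$ and use the $U^J$-openness of the ideal $J$ itself, whereas the paper centers at $g$ and uses the basic set $\widetilde{B}(g,J,1)$; your explicit $h/(1+h^2)$ reduction is a nice touch, since the paper leaves that ``without loss of generality'' step unjustified.
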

	\begin{proof}
		Without loss of generality, we can choose a function $g\in I\setminus J$ such that $|g(x)|<1$ for each $x\in X$. Clearly $\widetilde{B}(g,J,1)$ is an open set in the $U^J$-topology. We assert that this set is not open in the $U^I$-topology. If possible let $\widetilde{B}(g,J,1)$ be open in the $U^I$-topology. Then there exists $\epsilon>0$ in $\mathbb{R}$ such that $\widetilde{B}(g,I,\epsilon)\subseteq\widetilde{B}(g,J,1)$. Since $g+\frac{\epsilon}{2}g\in \widetilde{B}(g,I,\epsilon)$, this implies that $g+\frac{\epsilon}{2}g\in \widetilde{B}(g,J,1)$. It follows $g+\frac{\epsilon}{2}g-g\in J$, i.e., $\frac{\epsilon}{2}g\in J$, a contradiction to the initial choice that $g\notin J$. 
	\end{proof}
	\begin{remark}\label{Rem3.2}
		A careful modification in the above chain of arguments yields that $\widetilde{B}(g,J,1)$, which is an open set in the $U^J$-topology (and therefore open in the $m^J$-topology) is not open in the $m^I$-topology. Therefore we can say that whenever $I$ and $J$ are distinct ideals in $C(X)$, it is the case that $m^I$-topology is different from $m^J$-topology.
	\end{remark}
	Like any homogeneous space, $C_{U^I}(X)$ (respectively $C_{m^I}(X)$) is either devoid of any isolated point or all the points of this space are isolated. The following theorem clarifies the situation.
	\begin{theorem}
		The following three statements are equivalent for an ideal $I$ in $C(X)$:
		\begin{enumerate}
			\item $C_{U^I}(X)$ is a discrete space.
			\item $C_{m^I}(X)$ is a discrete space.
			\item $I=(0)$.
		\end{enumerate}
	\end{theorem}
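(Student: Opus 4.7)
The plan is to verify $(3)\Rightarrow(1)$ and $(3)\Rightarrow(2)$ directly from the definitions, and then establish the converses by contrapositive, using the homogeneity of the two spaces to reduce discreteness to the openness of the single point $\{0\}$.

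The direct implications are immediate: if $I=(0)$, then a basic $U^I$-neighbourhood $\widetilde{B}(0,I,\epsilon)$ consists of those $g\in C(X)$ with $\sup_{x\in X}|g(x)|<\epsilon$ and $g\in(0)$, which forces $g=0$; and the analogous basic $m^I$-neighbourhood of $0$, taken with any positive unit $u$, collapses to $\{0\}$ as well. Since $C_{U^I}(X)$ is an additive topological group and $C_{m^I}(X)$ is a topological ring (vide \cite{Azarpanah}), translation invariance turns openness of $\{0\}$ into openness of every singleton, giving discreteness.

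For $(1)\Rightarrow(3)$, assume $C_{U^I}(X)$ is discrete, so by homogeneity there exists $\epsilon>0$ with $\widetilde{B}(0,I,\epsilon)=\{0\}$. Suppose towards a contradiction that some nonzero $f\in I$ exists. The key step is to manufacture a nonzero bounded companion of $f$ that still lies in $I$: take $h=\tfrac{\epsilon}{3}\cdot\dfrac{f}{1+f^2}$. Since $1/(1+f^2)\in C(X)$ and $I$ is an ideal, $h\in I$; the inequality $|t|/(1+t^2)\leq \tfrac12$ for all $t\in\mathbb{R}$ gives $\sup_{x\in X}|h(x)|\leq \epsilon/6<\epsilon$; and $h(x)\neq 0$ wherever $f(x)\neq 0$. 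Hence $h$ is a nonzero element of $\widetilde{B}(0,I,\epsilon)$, the desired contradiction. The argument for $(2)\Rightarrow(3)$ is parallel: discreteness of $C_{m^I}(X)$ supplies a positive unit $u\in C(X)$ with $\{g\in I:|g(x)|<u(x)\text{ for all }x\in X\}=\{0\}$, and for the same nonzero $f\in I$ the element $h=\dfrac{u\,f}{2(1+f^2)}$ belongs to $I$, satisfies $|h(x)|\leq u(x)/4<u(x)$ pointwise, and is not identically zero, producing the required contradiction.

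The only subtlety I anticipate is making sure the smaller-norm witness is actually in $I$: naïve truncation or multiplication by an indicator function would either destroy continuity or fall outside the ideal. The choice $f/(1+f^2)$ is tailored precisely because it is an honest $C(X)$-multiple of $f$ that is automatically bounded by $\tfrac12$, so scaling it by the constant $\epsilon/3$ in the $U^I$-case (respectively by the prescribed positive unit $u$ in the $m^I$-case) slips it inside any preassigned basic neighbourhood of $0$.
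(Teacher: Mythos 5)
Your proposal is correct, and every step checks out: the function $t\mapsto t/(1+t^2)$ is bounded by $\tfrac12$, so $h=\tfrac{\epsilon}{3}\cdot\frac{f}{1+f^2}$ (respectively $h=\frac{uf}{2(1+f^2)}$) is a genuine $C(X)$-multiple of $f$, hence lies in $I$, is nonzero wherever $f$ is, and fits inside the prescribed basic neighbourhood of $0$; together with the trivial direct implications this closes the cycle of equivalences. Your route differs from the paper's in how the converse directions are organized. The paper handles $(1)\Rightarrow(2)$ for free (the $m^I$-topology is finer than the $U^I$-topology, so discreteness passes upward) and then disposes of $(2)\Rightarrow(3)$ by citing its Remark~3.2: distinct ideals yield distinct $m^I$-topologies, so if $I\neq(0)$ then the $m^I$-topology cannot coincide with the $m^{(0)}$-topology, which is discrete. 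Your argument instead proves $(1)\Rightarrow(3)$ and $(2)\Rightarrow(3)$ directly and self-containedly, without invoking the injectivity of $I\mapsto m^I$; the price is that you must build the small nonzero witness by hand, which is exactly what the $f/(1+f^2)$ normalization accomplishes (the same device that, in disguise, underlies the paper's ``without loss of generality $|g|<1$'' step in its Theorem~3.1). What your version buys is independence from the earlier uniqueness theorem and an explicit exhibition of a nonzero element in every basic neighbourhood of $0$; what the paper's version buys is brevity, since the hard work was already done once in establishing that the assignment $I\mapsto m^I$ is one-to-one. One cosmetic remark: you do not actually need homogeneity to extract $\widetilde{B}(0,I,\epsilon)=\{0\}$ from discreteness --- openness of the singleton $\{0\}$ already gives a basic neighbourhood of $0$ contained in, hence equal to, $\{0\}$.
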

	\begin{proof}
		If $I=(0)$, then for each $f\in C(X)$, $\widetilde{B}(f,I,1)=\{f\}$ and therefore each point of $C_{U^I}(X)$ (and $C_{m^I}(X)$) is isolated. This settles the implication $(3)\implies(1)$ and $(3)\implies(2)$. $(1)\implies(2)$ is trivial because $m^I$-topology is finer than the $U^I$-topology. Suppose $(3)$ is false, i.e., $I\neq (0)$. Then the Remark~\ref{Rem3.2} and the implication $(3)\implies(2)$ imply that $m^I$-topology is different from the discrete topology. Thus $(2)\implies(3)$.
	\end{proof}
	It is a standard result in the study of function spaces that $C_U(X)$ is a topological vector space if and only if $X$ is pseudocompact [$2M6$,~\cite{Gillman}]. The following fact is a minor improvement of this result.
	\begin{theorem}
		For an ideal $I$ of $C(X)$, $C_{U^I}(X)$ is a topological vector space if and only if $I=C(X)$ and $X$ is pseudocompact.
	\end{theorem}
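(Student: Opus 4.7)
The plan is to treat the two implications separately, with both ultimately reducing to the classical Gillman--Jerison characterization $[2M6, \cite{Gillman}]$ of when $C_U(X)$ is a topological vector space. The essentially new ingredient is to extract the equality $I=C(X)$ from the topological vector space hypothesis; once that is in hand, the $U^I$-topology collapses to the ordinary $U$-topology and the cited result applies verbatim in both directions.

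For the easy direction, I would note that when $I=C(X)$ the side condition $f-g\in I$ in the definition of $\widetilde{B}(f,I,\epsilon)$ is vacuous, so $\widetilde{B}(f,I,\epsilon)=U(f,\epsilon)$ and hence $C_{U^I}(X)=C_U(X)$ as topological spaces. Combined with pseudocompactness of $X$, $2M6$ in \cite{Gillman} then delivers the topological vector space structure.

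For the nontrivial direction, assume $C_{U^I}(X)$ is a topological vector space and fix an arbitrary $f\in C(X)$. Continuity of scalar multiplication, restricted to the fiber $\mathbb{R}\times\{f\}$, shows that the map $\alpha\mapsto \alpha f$ is continuous at $\alpha=0$ with value $0$; hence there exists $\delta>0$ such that $\alpha f\in\widetilde{B}(0,I,1)$ whenever $|\alpha|<\delta$. In particular $\alpha f\in I$ for every such $\alpha$. Picking any $\alpha_0\in(0,\delta)$ and using that the constant function $1/\alpha_0$ belongs to $C(X)$ together with the ideal property of $I$, I obtain $f=(1/\alpha_0)(\alpha_0 f)\in I$. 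Since $f\in C(X)$ was arbitrary, $I=C(X)$, so $C_{U^I}(X)=C_U(X)$ and the pseudocompactness of $X$ follows from the reverse implication of $2M6$.

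I do not anticipate a serious obstacle; the only delicate step is the scalar-multiplication-at-a-nonzero-constant trick that converts the containment $\alpha_0 f\in I$ into $f\in I$ by regarding $1/\alpha_0$ as an element of $C(X)$. Everything else is straightforward bookkeeping with the basic neighborhood system $\widetilde{B}(\cdot,I,\cdot)$ and a direct appeal to the classical characterization.
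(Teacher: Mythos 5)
Your proposal is correct and follows essentially the same route as the paper: both directions reduce to $2M6$ of Gillman--Jerison after showing $C_{U^{C(X)}}(X)=C_U(X)$, and the key step of extracting $I=C(X)$ uses continuity of scalar multiplication at $(0,f)$ to force $\alpha f\in\widetilde{B}(0,I,1)$, hence $\alpha f\in I$ and then $f\in I$ via the ideal property. The paper's proof is word-for-word the same idea (it uses the specific scalar $\epsilon/2$ where you use an arbitrary $\alpha_0\in(0,\delta)$), so there is nothing to add.
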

	\begin{proof}
		If $I=C(X)$, then $C_{U^I}(X)=C_U(X)$, which is a topological vector space if $X$ is pseudocompact as observed above. Conversely let $C_{U^I}(X)$ be a topological vector space and $f\in C(X)$. Then there exists $\epsilon>0$ in $\mathbb{R}$ such that $(-\epsilon,\epsilon)\times\widetilde{B}(f,I,\epsilon)\subseteq \widetilde{B}(0,I,1)$. This implies that $\frac{\epsilon}{2}f\in I$ and hence $f\in I$. Thus $I=C(X)$. Clearly then $U^I$-topology on $C(X)$ reduces to the $U$-topology. We can therefore say that $C_U(X)$ is a topological vector space. In view of the observations made above, it follows that $X$ is a pseudocompact space.
	\end{proof}
	The following proposition gives a set of conditions in which each implies the next
	\begin{theorem}\label{Th3.5}
		\hspace*{3cm}
		\begin{enumerate}
			\item The $U^I$-topology = the $m^I$-topology on $C(X)$.
			\item $C_{U^I}(X)$ is a topological ring.
			\item $I\subset C^*(X)$.
			\item $I\cap C^*(X)=I\cap C_\psi(X)$.
		\end{enumerate}
	\end{theorem}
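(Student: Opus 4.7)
My strategy is to establish the chain $(1)\Rightarrow(2)\Rightarrow(3)\Rightarrow(4)$ one implication at a time. The first step, $(1)\Rightarrow(2)$, is immediate from what is already recalled in the introduction: the $m^I$-topology always makes $C(X)$ into a topological ring (proved in \cite{Azarpanah}), so if the $U^I$- and $m^I$-topologies coincide on $C(X)$, then $C_{U^I}(X)$ is that very topological ring.

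For $(2)\Rightarrow(3)$ I argue by contradiction. Suppose $C_{U^I}(X)$ is a topological ring but some unbounded $f\in I$ exists. Continuity of multiplication at $(f,0)$ supplies an $\epsilon>0$ with $f\cdot\widetilde{B}(0,I,\epsilon)\subseteq\widetilde{B}(0,I,1)$. The key move is to produce a single $h\in\widetilde{B}(0,I,\epsilon)$ for which $fh$ is demonstrably unbounded; the natural candidate is
\[
h=\frac{\epsilon}{2}\cdot\frac{f}{1+|f|}.
\]
Because $I$ is an ideal and $\frac{1}{1+|f|}\in C(X)$, this $h$ lies in $I$; moreover $|h|<\epsilon/2$, so $h\in\widetilde{B}(0,I,\epsilon)$. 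Yet $fh=\frac{\epsilon f^{2}}{2(1+|f|)}$ grows without bound wherever $|f|$ does, contradicting $fh\in\widetilde{B}(0,I,1)$.

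For $(3)\Rightarrow(4)$, the inclusion $I\cap C_\psi(X)\subseteq I\cap C^*(X)$ is free from $C_\psi(X)\subseteq C^*(X)$ (a function with pseudocompact support is bounded on that support and vanishes off it). For the reverse, take $f\in I\cap C^*(X)$; using (3) together with the fact that $I$ is an ideal, for every $g\in C(X)$ one has $fg\in I\subseteq C^*(X)$, so $fg$ is bounded on $X$. The standard description of $C_\psi(X)$ as the set of bounded multipliers of $C(X)$, namely $\{f\in C(X):fg\in C^*(X)\ \text{for every}\ g\in C(X)\}$, then gives $f\in C_\psi(X)$.

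The main technical point I anticipate is invoking, at the end of the last step, the identification of $C_\psi(X)$ with the bounded multipliers of $C(X)$; if it is not taken as known, it deserves a short separate argument through the relative pseudocompactness of $\mathrm{coz}(f)$ combined with boundedness of $f$. The only other genuinely creative choice in the proof is picking $h=\frac{\epsilon}{2}\cdot\frac{f}{1+|f|}$ in $(2)\Rightarrow(3)$, engineered simultaneously to lie in $I$, to be small in sup-norm, and to drive $fh$ to infinity wherever $f$ is unbounded.
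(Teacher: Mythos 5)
Your proposal is correct and follows essentially the same route as the paper: $(1)\Rightarrow(2)$ from the known fact that $C_{m^I}(X)$ is a topological ring, $(2)\Rightarrow(3)$ via continuity of multiplication at a point pairing $f$ with $0$ and the very same test function $\frac{\epsilon f}{2(1+|f|)}$ (the paper merely verifies the unboundedness of the product by a sequence argument where you use a direct estimate), and $(3)\Rightarrow(4)$ from the fact that $C_\psi(X)$ is the largest ideal contained in $C^*(X)$, which is exactly the bounded-multiplier characterization you invoke.
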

	\begin{proof}
		\hspace*{3cm}
		\begin{enumerate}
			\item $\implies(2)$ is trivial because $C_{m^I}(X)$ is a topological ring.
			\item$\implies(3):$ Suppose $(2)$ holds but $I\not\subset C^*(X)$. Choose $f\in I$ such that $f\notin C^*(X)$. Since the product function
			\begin{alignat*}{1}
				C_{U^I}(X)\times C_{U^I}(X)&\to C_{U^I}(X)\\
				(g,h)&\mapsto g.h
			\end{alignat*} is continuous at the point $(0,f)$, we get an $\epsilon>0$ such that $\widetilde{B}(0,I,\epsilon)\times\widetilde{B}(f,I,\epsilon)\subseteq\widetilde{B}(0,I,1)$. Let $g=\frac{\epsilon.f}{2(1+|f|)}$. Then $g\in \widetilde{B}(0,I,\epsilon)$, this implies that $g.f\in\widetilde{B}(0,I,1)$ and hence $g(x).f(x)<1$ for each $x\in X$ i.e., for each $x\in X$, $\frac{\epsilon.f^2(x)}{2(1+|f(x)|)}<1$. Now since $f$ is an unbounded function on $X$, $|f(x_n)|\to\infty$ along a sequence $\{x_n\}_n$ in $X$. Consequently $\lim\limits_{n\to\infty}\frac{|f(x_n)|}{1+|f(x_n)|}=\lim\limits_{n\to\infty}[1-\frac{1}{1+|f(x_n)|}]=1$ and therefore there exists $k\in\mathbb{N}$ such that for all $n\geq k$, $\frac{|f(x_n)|}{1+|f(x_n)|}\geq\frac{3}{4}$. This implies that for each $n\geq k$, $\frac{\epsilon.|f(x_n)|}{2}.\frac{3}{4}\leq\frac{\epsilon.|f(x_n)|}{2}.\frac{|f(x_n)|}{1+|f(x_n)|}<1$ and so $\{|f(x_n)|:n\geq k\}$ becomes a bounded sequence in $\mathbb{R}$. This contradicts that $|f(x_n)|\to\infty$ as $n\to\infty$. Hence $I\subset C^*(X)$.
			\item  $\implies(4):$ Suppose $(3)$ holds. We need to show that $I\cap C^*(X)\subset I\cap C_\psi(X)$ (because $C_\psi(X)\subseteq C^*(X)$). Since $C_\psi(X)$ is the largest bounded ideal in $C(X)$ [Theorem $3.8$,~\cite{Azarpanah2}]. The condition $(3)$ implies that $I\subset C_\psi(X)$. Hence $I\cap C^*(X)=I=I\cap C_\psi(X)$.
		\end{enumerate}
	\end{proof}
	The statement $(4)$ may not imply the statements in the Theorem~\ref{Th3.5}. Consider the following example:
	\begin{example}
		Take $X=\mathbb{N},\ I=C_K(\mathbb{N})$. Then $U^I$-topology on $C(\mathbb{N})\subsetneqq m^I$-topology on $C(\mathbb{N})$.\\
		Proof of this claim: First observe that $C_K(\mathbb{N})\subset C^*(\mathbb{N})$. Now recall the function $j\in C^*(\mathbb{N})$ given by $j(n)=\frac{1}{n},\ n\in\mathbb{N}$. Then $\widetilde{B}(0,I,j)$ is an open set in $C(\mathbb{N})$ with $m^I$-topology. We assert that this set is not open in $C(\mathbb{N})$ with $U^I$-topology. Suppose otherwise, then there exists $\epsilon>0$ such that $0\in\widetilde{B}(0,I,\epsilon)\subset\widetilde{B}(0,I,j)$. Now there exists $k\in\mathbb{N}$ such that $\frac{2}{k}<\epsilon$. Let $f(n)=\begin{cases}
			\frac{\epsilon}{2}&\text{ when }n\leq k\\
			0&\text{ otherwise}
		\end{cases}$, then $f\in\widetilde{B}(0,I,\epsilon)$. But $f\notin\widetilde{B}(0,I,j)$
	\end{example}
	$Y\subset X$ is called a relatively pseudocompact or bounded subset of $X$ if for every $f\in C(X)$, $f(Y)$ is a bounded subset of $\mathbb{R}$. The previous Theorem is a special case of the more general Theorem, given below:
	\begin{theorem}\label{Th3.8}
		For a convex ideal $I$ of $C(X)$, $U^I$-topology $=m^I$-topology if and only if $X\setminus\bigcap Z[I]$ is a bounded subset of $X$.
	\end{theorem}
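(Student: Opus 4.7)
The plan is to prove both implications separately, using the characterization of convex ideals to control functions inside $I$. Recall first that every $h\in I$ vanishes on $\bigcap Z[I]$ (since $\bigcap Z[I]=\bigcap_{h\in I}Z(h)$), and that the $m^I$-topology is always finer than the $U^I$-topology (take $u$ to be the constant $\epsilon$).

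For the sufficiency direction, assume $S:=X\setminus\bigcap Z[I]$ is bounded in $X$. Fix any basic $m^I$-neighborhood $m^I(f,u)$ of $f\in C(X)$, where $u$ is a positive unit of $C(X)$. Since $1/u\in C(X)$ and $S$ is bounded, $1/u$ is bounded on $S$, so there exists $\delta>0$ with $u(x)\ge\delta$ for all $x\in S$. I then claim $\widetilde{B}(f,I,\delta)\subseteq m^I(f,u)$: if $g\in\widetilde{B}(f,I,\delta)$, then $f-g\in I$ vanishes on $\bigcap Z[I]$, so $|f(x)-g(x)|=0<u(x)$ there, while for $x\in S$ we have $|f(x)-g(x)|<\delta\le u(x)$. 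Hence $g\in m^I(f,u)$, and the two topologies agree.

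For necessity, I argue by contrapositive: suppose $S$ is not bounded, so some $f\in C(X)$ is unbounded on $S$; replacing $f$ by $|f|+1$, assume $f\ge 1$. Put $u=1/f$, a positive unit, and consider the $m^I$-basic neighborhood $m^I(0,u)$. I will show that for every $\epsilon>0$ the set $\widetilde{B}(0,I,\epsilon)$ fails to lie inside $m^I(0,u)$, so $m^I(0,u)$ is not $U^I$-open at $0$. Given $\epsilon>0$, pick $x_\epsilon\in S$ with $f(x_\epsilon)>3/\epsilon$, so that $u(x_\epsilon)<\epsilon/3$. Because $x_\epsilon\notin\bigcap Z[I]$, there exists $h\in I$ with $h(x_\epsilon)\ne 0$; rescaling, we may take $|h(x_\epsilon)|\ge\epsilon/2$. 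Define
\[
g(x)=\bigl((\tfrac{\epsilon}{2})\wedge h(x)\bigr)\vee(-\tfrac{\epsilon}{2}),
\]
the truncation of $h$ at $\pm\epsilon/2$. Then $\sup_x|g(x)|\le\epsilon/2<\epsilon$ and $|g|\le|h|$ pointwise; convexity of $I$ gives $g\in I$, so $g\in\widetilde{B}(0,I,\epsilon)$. However $|g(x_\epsilon)|=\epsilon/2>\epsilon/3>u(x_\epsilon)$, so $g\notin m^I(0,u)$. This completes the contrapositive.

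The only delicate step is the construction of $g$ in the necessity direction: one has to harvest a single element of $I$ that is nonzero at a chosen point of $S$, rescale it so that the truncation at height $\epsilon/2$ still hits $\pm\epsilon/2$ there, and then invoke convexity of $I$ to keep the truncated function inside $I$. Once the convexity of $I$ is in place the argument is direct; the rest of the proof is essentially bookkeeping with the fact that $I\subseteq M^{\bigcap Z[I]}$.
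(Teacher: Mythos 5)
Your overall strategy is sound, and the sufficiency half coincides with the paper's argument. The necessity half is in fact a structural simplification of the paper's: instead of extracting a $C$-embedded copy of $\mathbb{N}$ inside $X\setminus\bigcap Z[I]$ on which a positive unit tends to $0$, you take $f\in C(X)$ unbounded on $X\setminus\bigcap Z[I]$, set $u=1/(|f|+1)$, and defeat every candidate $\widetilde{B}(0,I,\epsilon)$ by working at a single well-chosen point $x_\epsilon$. That design is fine and avoids the $C$-embedding machinery altogether.

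The gap is the step ``$|g|\le|h|$ pointwise and $h\in I$, so convexity of $I$ gives $g\in I$.'' What you are invoking there is \emph{absolute} convexity ($|a|\le|b|$, $b\in I$ imply $a\in I$), not convexity ($0\le a\le b$, $b\in I$ imply $a\in I$), and for ideals of $C(X)$ the two notions genuinely differ: the principal ideal generated by the identity $i(x)=x$ in $C(\mathbb{R})$ is convex (if $0\le f\le ih$ one checks that $f(x)/x$ extends continuously by $0$ over the origin, so $f\in(i)$), yet $|i|\le |i|$ while $|i|\notin(i)$ because $|x|/x$ admits no continuous extension. So your two-sided truncation of $h$ need not lie in $I$ when $h$ changes sign, and the hypothesis of the theorem is only convexity. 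The repair is immediate and is essentially what the paper does: replace $h$ by $h^2\in I$, which is nonnegative and still nonzero at $x_\epsilon$, rescale so that $h^2(x_\epsilon)\ge\frac{\epsilon}{2}$, and set $g=h^2\wedge\frac{\epsilon}{2}$; then $0\le g\le h^2$ and plain convexity puts $g$ in $I$, after which your estimate $|g(x_\epsilon)|=\frac{\epsilon}{2}>u(x_\epsilon)$ goes through unchanged. With that one correction the proof is complete.
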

	\begin{proof}
		First let $X\setminus\bigcap Z[I]$ be bounded and $\widetilde{B}(f,I,u)$ be an open set in $m^I$-topology, where $f\in C(X)$ and $u$, positive unit in $C(X)$. Now $\frac{1}{u}$ is bounded in $X\setminus\bigcap Z[I]$, i.e., there exists $\lambda>0$ such that $\frac{1}{u(x)}<\lambda$ for all $x\in X\setminus\bigcap Z[I]\implies u(x)>\frac{1}{\lambda}$ for all $x\in X\setminus\bigcap Z[I]$. We claim that $\widetilde{B}(f,I,\frac{1}{\lambda})\subset\widetilde{B}(f,I,u)$. Consider $g\in\widetilde{B}(f,I,\frac{1}{\lambda})$. Then $|g-f|<\frac{1}{\lambda}$ and $g-f\in I$. Now for all $x\in\bigcap Z[I]$, $(g-f)(x)=0<u(x)$ and for all $x\in X\setminus\bigcap Z[I]$, $|g(x)-f(x)|<\frac{1}{\lambda}<u(x)$, i.e., $|g-f|<u$ on $X$. So $g\in\widetilde{B}(f,I,u)$. For the converse part, suppose $X\setminus\bigcap Z[I]$ is not a bounded subset of $X$. Then there exists a positive unit $u$ in $C(X)$ and a $C$-embedded copy of $\mathbb{N}\subset X\setminus\bigcap Z[I]$ on which $u\to 0$. Clearly $\widetilde{B}(0,I,u)$ is an open set in $C_{m^I}(X)$. We claim that $\widetilde{B}(0,I,u)$ is not open in the $U^I$-topology. If possible let there exists $\epsilon>0$ such that $0\in\widetilde{B}(0,I,\epsilon)\subset\widetilde{B}(0,I,u)$. Since $u(n)\to 0$ as $n\to\infty$ for $n\in\mathbb{N}$, there exists $k\in\mathbb{N}$ such that $u(k)<\frac{\epsilon}{2}$. As $\mathbb{N}\subset X\setminus\bigcap Z[I]$, there exists an $f(\geq 0)\in I$ such that $f(k)>0$. Since $\mathbb{N}$ is $C$-embedded in $X$, there exists $h(\geq 0)\in C(X)$ such that $h(k)=\frac{\epsilon}{2f(k)}$. Let $g=f.h$. Then $g\in I$ and  $g(k)=\frac{\epsilon}{2}$. Set $g'=g\wedge\frac{\epsilon}{2}$. Then $g'\leq g\implies g'\in I$, as $I$ is convex. Also $g'\leq\frac{\epsilon}{2}\implies g'\in\widetilde{B}(0,I,\epsilon)$ which further implies that $g'\in\widetilde{B}(0,I,u)\implies g'<u$. But $g'(k)=\frac{\epsilon}{2}>u(k)$ -- a contradiction.
	\end{proof}
	\begin{remark}
		With the special choice $I=C(X)$, the above Theorem reads: The $U$-topology $=m$-topology on $C(X)$ if and only if $X$ is pseudocompact. This is a standard result in the theory of rings of continuous function [see 2M6 and 2N~\cite{Gillman}].
	\end{remark}
	It is proved in~\cite{Azarpanah}, Proposition $2.2$ that if $I$ is an ideal in $C(X)$ then any ideal $J$ containing $I$ is clopen in $C_{m^I}(X)$ and also $C^*(X)\cap I$ is clopen in $C_{m^I}(X)$. These two facts can be deduced from the following proposition, because the $m^I$-topology is finer than the $U^I$-topology.
	\begin{theorem}\label{Th3.7}
		\hspace*{3cm}
		\begin{enumerate}
			\item If $J$ is any additive subgroup of $(C(X),+,.)$ containing the ideal $I$, then $J$ is a clopen subset of $C_{U^I}(X)$.\label{Th3.7a}
			\item For any ideal $I$ in $C(X)$, $I\cap C^*(X)$ is a clopen subset of $C_{U^I}(X)$.\label{Th3.7b}
		\end{enumerate}
	\end{theorem}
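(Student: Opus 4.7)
For part \eqref{Th3.7a}, the plan is to exploit the fact that $C_{U^I}(X)$ is a topological group under addition, so that every basic neighborhood $\widetilde{B}(f,I,\epsilon)$ is the translate $f+\widetilde{B}(0,I,\epsilon)$. Fix any $\epsilon>0$ (say $\epsilon=1$) and argue: if $f\in J$ and $g\in\widetilde{B}(f,I,1)$, then $f-g\in I\subseteq J$; combining $f\in J$, $f-g\in J$ and the fact that $J$ is an additive subgroup yields $g=f-(f-g)\in J$, so $\widetilde{B}(f,I,1)\subseteq J$ and $J$ is open. Running the same computation with $f\notin J$ shows that any $g\in\widetilde{B}(f,I,1)$ satisfies $g\notin J$ (else $f=g+(f-g)\in J$), so the complement is also open. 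Hence $J$ is clopen.

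For part \eqref{Th3.7b}, the subtlety is that $I\cap C^*(X)\subseteq I$ need not contain $I$, so \eqref{Th3.7a} does not apply directly. Instead I would argue by the direct $\widetilde{B}$-estimate, using the two defining conditions of the base $\widetilde{B}(f,I,\epsilon)$ separately: the condition $f-g\in I$ controls membership in $I$, while the uniform bound $\sup_x|f(x)-g(x)|<\epsilon$ controls boundedness. Concretely, if $f\in I\cap C^*(X)$, then for any $g\in\widetilde{B}(f,I,\epsilon)$ we get $g=f-(f-g)\in I$ and $|g|\leq|f|+\epsilon$ on $X$, so $g\in I\cap C^*(X)$; this gives openness.

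For closedness of $I\cap C^*(X)$, I would split the complement into two cases. If $f\notin I$, then any $g\in\widetilde{B}(f,I,1)$ satisfies $f-g\in I$, and $g\in I$ would force $f\in I$, a contradiction; hence $g\notin I\cap C^*(X)$. If $f\in I$ but $f\notin C^*(X)$, then for any $g\in\widetilde{B}(f,I,1)$ the bound $\sup_x|f(x)-g(x)|<1$ forces $g$ to be unbounded (the difference is bounded while $f$ is not), so $g\notin C^*(X)$, again $g\notin I\cap C^*(X)$. In both cases $\widetilde{B}(f,I,1)$ misses $I\cap C^*(X)$, so its complement is open.

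I do not anticipate a real obstacle: the whole argument rests on the two almost-orthogonal clauses in the definition of $\widetilde{B}(f,I,\epsilon)$, and the case split above is the only place where one has to be slightly careful. The point worth emphasizing is conceptual rather than computational: unlike in the $m^I$-topology, where bounded multiplication ``mixes'' the algebraic and metric conditions, here the uniform bound $\epsilon$ is independent of $f$, which is exactly what makes translation-invariance and the case-by-case complement argument work cleanly.
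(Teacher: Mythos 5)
Your proposal is correct and follows essentially the same route as the paper: translation of the basic neighborhood $\widetilde{B}(f,I,1)$ together with the subgroup/ideal property of $J$ (respectively $I$) for openness, and the same complement argument for closedness. The only difference is that you spell out the case split ($f\notin I$ versus $f\in I\setminus C^*(X)$) that the paper dismisses as ``routine,'' which is a faithful completion rather than a new idea.
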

	\begin{proof}
		\hspace*{3cm}
		\begin{enumerate}
			\item Let $f\in J$. Then $f\in\widetilde{B}(f,I,1)\subset J$, because $g\in \widetilde{B}(f,I,1)\implies g-f\in I\subset J\implies g=f+(g-f)\in J$. Thus $J$ becomes open in $C_{U^I}(X)$. To prove that $J$ is also closed in this space let $f\notin J, f\in C(X)$. Then it is not at all hard to check that $\widetilde{B}(f,I,1)\cap J$=$\emptyset$ and hence $J$ is closed in $C_{U^I}(X)$.
			\item For any $f\in I\cap C^*(X)$, it is routine to check that $f\in \widetilde{B}(f,I,1)\subset I\cap C^*(X)$. Then $I\cap C^*(X)$ is open in $C_{U^I}(X)$. To settle the closeness of $I\cap C^*(X)$ in $C_{U^I}(X)$, we need to verify that for any $f\in C(X)\setminus(I\cap C^*(X))$, $\widetilde{B}(f,I,1)\cap I\cap C^*(X)=\emptyset$ and that verification is also routine.
		\end{enumerate}
	\end{proof}
	Before proceeding further we recall for any $f\in C(X)$ the map \begin{alignat*}{1}
		\phi_f:\mathbb{R}&\to C(X)\\
		r&\mapsto r.f
	\end{alignat*} already introduced in~\cite{Azarpanah2},~\cite{Azarpanah}.
	\begin{lemma}\label{Lem3.8}
		Let $I$ be an ideal in $C(X)$. Then for $f\in C(X)$, \begin{alignat*}{1}
			\phi_f:\mathbb{R}&\to C_{U^I}(X)\\
			r&\mapsto r.f
		\end{alignat*} is a continuous map if and only if $f\in I\cap C^*(X)$ $[$compare with an analogous fact in the $m^I$-topology: Lemma $3.1$ in~\cite{Azarpanah}$]$.
	\end{lemma}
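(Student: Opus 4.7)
The plan is to reduce the problem to continuity at a single point using the topological group structure, and then analyse the two clauses in the definition of the basic neighbourhood $\widetilde{B}(0,I,\epsilon)$ separately. Note first that $\phi_f$ is a group homomorphism from $(\mathbb{R},+)$ to the additive topological group $(C_{U^I}(X),+)$, since $\phi_f(r+s) = rf+sf = \phi_f(r)+\phi_f(s)$. Hence $\phi_f$ is continuous on $\mathbb{R}$ if and only if it is continuous at $0$, i.e., for every $\epsilon>0$ there exists $\delta>0$ such that $|r|<\delta$ forces $rf\in\widetilde{B}(0,I,\epsilon)$.

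For sufficiency, assume $f\in I\cap C^*(X)$ and put $M=\sup_{x\in X}|f(x)|+1$. Given $\epsilon>0$, set $\delta=\epsilon/M$. Then for $|r|<\delta$ we have $\sup_{x\in X}|rf(x)|\leq |r|M<\epsilon$; moreover $rf-0=rf\in I$, since $f\in I$ and $I$ is an ideal of $C(X)$. Thus $rf\in\widetilde{B}(0,I,\epsilon)$, which yields continuity at $0$, hence everywhere.

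For necessity, suppose $\phi_f$ is continuous. Pick any $\epsilon>0$ and a corresponding $\delta>0$ as above. For any nonzero $r$ with $|r|<\delta$ we obtain $rf\in\widetilde{B}(0,I,\epsilon)$, which by definition gives two pieces of information. The membership condition $rf\in I$ together with $r\neq 0$ forces $f=\tfrac{1}{r}(rf)\in I$, since $I$ is an ideal and $\tfrac{1}{r}$ is a constant function in $C(X)$. The norm condition $\sup_{x\in X}|rf(x)|<\epsilon$ gives $\sup_{x\in X}|f(x)|\leq\epsilon/|r|<\infty$; in particular $f$ cannot be unbounded (for if $f\notin C^*(X)$ then $rf$ is unbounded for every nonzero $r$, contradicting $\sup|rf|<\epsilon$). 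Therefore $f\in I\cap C^*(X)$.

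There is no serious obstacle here; the only mildly delicate point is extracting $f\in I$ from $rf\in I$, which works because ideals of $C(X)$ are closed under multiplication by real constants, and extracting $f\in C^*(X)$ from the uniform bound clause of $\widetilde{B}(0,I,\epsilon)$, which is where the $U^I$-topology (as opposed to the $m^I$-topology of Lemma $3.1$ in \cite{Azarpanah}) contributes its distinctive ingredient.
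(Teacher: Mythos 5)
Your proof is correct and takes essentially the same approach as the paper: both directions rest on extracting $f\in I$ from $rf\in I$ for a single nonzero $r$ and $f\in C^*(X)$ from the uniform bound $\sup_{x\in X}|rf(x)|<\epsilon$, while sufficiency is the estimate $\sup_{x\in X}|(s-r)f(x)|\leq |s-r|M$. The only cosmetic difference is that you reduce sufficiency to continuity at $0$ via the group-homomorphism property of $\phi_f$, whereas the paper verifies continuity directly at an arbitrary $r\in\mathbb{R}$ by the same computation.
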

	\begin{proof}
		First assume that $\phi_f$ is continuous, in particular at the point $0$. So there exists $\delta>0$ in $\mathbb{R}$ such that $\phi_f(-\delta,\delta)\subseteq\widetilde{B}(\phi_f(0),I,1)=\widetilde{B}(0,I,1)$. This implies that $\phi_f(\frac{\delta}{2})\in\widetilde{B}(0,I,1)$ and hence $|\frac{\delta}{2}f|< 1$ and $\frac{\delta}{2}f\in I$. Clearly then $f\in C^*(X)\cap I$. Conversely let $f\in C^*(X)\cap I$. Then $|f|<M$ on $X$ for some $M>0$ in $\mathbb{R}$. Choose $r\in\mathbb{R}$ and $\epsilon>0$ arbitrarily. Then it is not at all hard to check that $\phi_f(r-\frac{\epsilon}{M},r+\frac{\epsilon}{M})\subseteq\widetilde{B} (\phi_f(r),I,\epsilon)$. Then $\phi_f$ is continuous at $r$.
	\end{proof}
	\begin{corollary}
		For $f\in C(X)$, \begin{alignat*}{1}
			\phi_f:\mathbb{R}&\to C_U(X)\\
			r&\mapsto r.f
		\end{alignat*} is continuous if and only if $f\in C^*(X)$.
	\end{corollary}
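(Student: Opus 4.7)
The plan is to obtain this corollary as an immediate specialization of Lemma~\ref{Lem3.8}. Recall that earlier in the section it is noted that the $U$-topology on $C(X)$ coincides with the $U^I$-topology in the special case $I = C(X)$, so that $C_{U}(X) = C_{U^I}(X)$ when $I = C(X)$.

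First I would substitute $I = C(X)$ directly into the statement of Lemma~\ref{Lem3.8}. The lemma then tells us that $\phi_f \colon \mathbb{R} \to C_U(X)$ is continuous if and only if $f \in C(X) \cap C^*(X)$. Since $C^*(X) \subseteq C(X)$, the intersection $C(X) \cap C^*(X)$ is just $C^*(X)$, giving the desired equivalence.

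There is essentially no obstacle here: the work has all been done in the proof of Lemma~\ref{Lem3.8}, and the only thing to check is that the reduction of $U^I$ to $U$ at $I = C(X)$ is well-defined, which is immediate from the definition of the basic open sets $\widetilde{B}(f, C(X), \epsilon) = U(f,\epsilon)$ (the condition $f-g \in C(X)$ is vacuous). No separate argument is needed.
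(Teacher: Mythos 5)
Your proposal is correct and is exactly the intended derivation: the paper states this as an immediate corollary of Lemma~\ref{Lem3.8} with $I=C(X)$, where $C_{U^{C(X)}}(X)=C_U(X)$ and $C(X)\cap C^*(X)=C^*(X)$. Nothing further is needed.
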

	\begin{theorem}\label{Th3.13}
		The component of $0$ in $C_{U^I}(X)$ is $I\cap C^*(X)$.
	\end{theorem}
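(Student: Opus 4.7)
The plan is to prove the equality by two inclusions, using the two immediately preceding results as the essential inputs.

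For one direction, Theorem~\ref{Th3.7}(\ref{Th3.7b}) already tells us that $I\cap C^*(X)$ is clopen in $C_{U^I}(X)$. Since any clopen set containing a point must contain the entire component of that point, the component of $0$ is forced to lie inside $I\cap C^*(X)$. This half is essentially free from Theorem~\ref{Th3.7}.

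For the reverse inclusion, the strategy is to exhibit, for each $f\in I\cap C^*(X)$, a connected subset of $C_{U^I}(X)$ that contains both $0$ and $f$. The natural candidate is the image of the map $\phi_f:\mathbb{R}\to C_{U^I}(X)$, $r\mapsto r\cdot f$, introduced just above the statement. By Lemma~\ref{Lem3.8}, the hypothesis $f\in I\cap C^*(X)$ is exactly what makes $\phi_f$ continuous. Hence $\phi_f(\mathbb{R})$ is the continuous image of a connected space, and is therefore connected in $C_{U^I}(X)$. Since $\phi_f(0)=0$ and $\phi_f(1)=f$, this connected set contains both points, so $f$ lies in the connected component of $0$.

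Combining the two inclusions yields that the component of $0$ equals $I\cap C^*(X)$. The only potentially delicate step is the reverse inclusion, and the delicacy is handled entirely by Lemma~\ref{Lem3.8}: without the characterization of continuity of $\phi_f$ in terms of membership in $I\cap C^*(X)$, we would not know that the line $\{rf:r\in\mathbb{R}\}$ is actually a continuous path in the $U^I$-topology. Once that is available, the argument is a one-line application of the fact that continuous images of connected spaces are connected.
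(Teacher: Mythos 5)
Your proof is correct and takes essentially the same approach as the paper: both use Lemma~\ref{Lem3.8} to make each $\phi_f(\mathbb{R})$ a connected set joining $0$ to $f$, and Theorem~\ref{Th3.7}(\ref{Th3.7b}) (clopenness of $I\cap C^*(X)$) for the containment of the component in $I\cap C^*(X)$. The only cosmetic difference is that the paper assembles all these images into the single connected set $I\cap C^*(X)=\bigcup_{f\in I\cap C^*(X)}\phi_f(\mathbb{R})$ rather than arguing pointwise.
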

	\begin{proof}
		It follows from Lemma~\ref{Lem3.8} that $I\cap C^*(X)=\bigcup\limits_{f\in I\cap C^*(X)}\phi_f(\mathbb{R})$, a connected subset of $C_{U^I}(X)$. Since $I\cap C^*(X)$ is a clopen set in $C_{U^I}(X)$ (Theorem~\ref{Th3.7}(\ref{Th3.7b})), it is the case that $I\cap C^*(X)$ is the largest connected subset of $C_{U^I}(X)$ containing $0$. Hence $I\cap C^*(X)$ is the component of $0$ in $C_{U^I}(X)$.
	\end{proof}
	\begin{corollary}
		$C^*(X)$ is the component of $0$ in $C_U(X)$
	\end{corollary}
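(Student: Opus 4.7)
The statement is an immediate corollary of Theorem~\ref{Th3.13} obtained by specializing the ideal. My plan is simply to set $I = C(X)$ in that theorem and then unwind the two trivial identifications that arise.

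First I would recall, as noted in the introduction to Section~\ref{Sec3}, that the $U$-topology on $C(X)$ is the special case of the $U^I$-topology corresponding to $I = C(X)$, so $C_{U^{C(X)}}(X) = C_U(X)$. Second, I would observe that $C^*(X) \subseteq C(X)$, hence $C(X) \cap C^*(X) = C^*(X)$. Substituting these two identifications into the conclusion of Theorem~\ref{Th3.13}, which asserts that the component of $0$ in $C_{U^I}(X)$ is $I \cap C^*(X)$, yields precisely that the component of $0$ in $C_U(X)$ is $C^*(X)$.

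There is no genuine obstacle here; the corollary exists to record the classical specialization of the general result. If one wished to avoid invoking Theorem~\ref{Th3.13} directly, one could reprove it in the special case: use the corollary to Lemma~\ref{Lem3.8} to see that $C^*(X) = \bigcup_{f \in C^*(X)} \phi_f(\mathbb{R})$ is a union of connected sets through $0$, hence connected, and use Theorem~\ref{Th3.7}(\ref{Th3.7b}) with $I = C(X)$ to conclude that $C^*(X)$ is clopen in $C_U(X)$, so it must be the component of $0$. But the one-line deduction from Theorem~\ref{Th3.13} is cleaner and is clearly what the authors intend.
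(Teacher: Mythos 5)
Your proposal is correct and matches the paper's intent exactly: the corollary is stated without proof precisely because it is the specialization $I=C(X)$ of Theorem~\ref{Th3.13}, using $C_{U^{C(X)}}(X)=C_U(X)$ and $C(X)\cap C^*(X)=C^*(X)$. Nothing further is needed.
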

	To find out when does the space $C_{U^I}(X)$ become locally compact, we reproduce the Lemma $4.1(a)$ from the article~\cite{Azarpanah}:
	\begin{lemma}
		For any positive unit $u$ in $C(X)$ and for a finite subset $\{a_1,a_2,...,a_k\}$ of $X\setminus \bigcap Z[I]$, for each $i\in\{1,2,...,k\}$, there exists $t_i\in I$ such that $|t_i|<u,\ t_i(a_i)=\frac{1}{2}u(a_i)$ and $t_i(a_j)=0$ for $j\neq i$.
	\end{lemma}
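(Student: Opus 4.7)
The plan is to produce each $t_i$ as a suitable ``truncation'' of a function in $I$ that has already been arranged to take the value $1$ at $a_i$ and $0$ at the remaining points $a_j$. More precisely, I would first build an auxiliary $s_i\in I$ with $s_i(a_i)=1$ and $s_i(a_j)=0$ for $j\neq i$, and then set
\[
t_i=\frac{u\cdot s_i}{1+|s_i|}.
\]
The denominator $1+|s_i|$ is a positive continuous function on $X$, so $\frac{u}{1+|s_i|}\in C(X)$; since $I$ is an ideal and $s_i\in I$, this forces $t_i\in I$. The elementary inequality $\frac{|s_i(x)|}{1+|s_i(x)|}<1$ (strict, because the numerator is strictly smaller than the denominator) combined with positivity of $u$ yields $|t_i(x)|<u(x)$ for every $x\in X$. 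Moreover, plugging in the special values gives $t_i(a_i)=\frac{u(a_i)\cdot 1}{1+1}=\frac{1}{2}u(a_i)$ and $t_i(a_j)=\frac{u(a_j)\cdot 0}{1+0}=0$ for $j\ne i$, as required.

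To construct $s_i$, I would proceed as follows. Since $a_i\in X\setminus\bigcap Z[I]$, there exists $f_i\in I$ with $f_i(a_i)\neq 0$; rescaling by the constant $1/f_i(a_i)$ (which keeps us inside the ideal) we may assume $f_i(a_i)=1$. Because $X$ is completely regular and Hausdorff, for each $j\in\{1,\ldots,k\}\setminus\{i\}$ we can choose $h_{ij}\in C(X)$ with $0\le h_{ij}\le 1$, $h_{ij}(a_i)=1$ and $h_{ij}(a_j)=0$. Setting $H_i=\prod_{j\neq i}h_{ij}\in C(X)$ gives $H_i(a_i)=1$ and $H_i(a_j)=0$ for $j\neq i$. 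Finally, $s_i=f_i\cdot H_i$ belongs to $I$ (ideal property) and satisfies the required interpolation conditions at the points $a_1,\ldots,a_k$.

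The only delicate point is forcing the strict inequality $|t_i|<u$ globally on $X$: a naive scalar or polynomial multiple of $s_i$ with the correct values at the $a_i$'s would typically blow up somewhere and violate this bound. This is precisely what the ``bounded-truncation'' formula $\frac{u\,s_i}{1+|s_i|}$ resolves, because the magnification factor $\frac{1}{1+|s_i(x)|}$ is simultaneously continuous, never zero, and keeps $|s_i(x)|/(1+|s_i(x)|)$ strictly below $1$ uniformly on $X$ while evaluating to exactly $\tfrac12$ at the point $a_i$ where $s_i(a_i)=1$. No convexity of $I$ and no boundedness hypothesis on $u$ is needed, which is why the argument applies to an arbitrary ideal $I$ with $a_1,\ldots,a_k\notin\bigcap Z[I]$.
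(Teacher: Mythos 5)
Your proof is correct and complete. Note that the paper itself gives no argument for this lemma: it simply reproduces the statement from Lemma 4.1(a) of the cited Filomat paper of Azarpanah, Manshoor and Mohamadian, so there is no in-paper proof to compare against. Your construction supplies the missing details cleanly: the interpolating function $s_i=f_i\cdot H_i$ lies in $I$ because $I$ is an ideal and $H_i\in C(X)$, and it takes the value $1$ at $a_i$ and $0$ at the other $a_j$ (complete regularity plus the Hausdorff property guarantees the separating functions $h_{ij}$, since distinct points can be completely separated in a Tychonoff space). The truncation $t_i=\dfrac{u\,s_i}{1+|s_i|}$ again lies in $I$ because $\dfrac{u}{1+|s_i|}\in C(X)$, and the pointwise bound $\dfrac{|s_i(x)|}{1+|s_i(x)|}<1$ together with $u>0$ gives the required strict inequality $|t_i|<u$ everywhere on $X$ (including at points where $s_i$ vanishes, where $|t_i|=0<u$). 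The evaluations $t_i(a_i)=\tfrac12 u(a_i)$ and $t_i(a_j)=0$ are immediate. Your closing observation is also accurate: no convexity of $I$ and no bound on $u$ is used, so the lemma indeed holds for an arbitrary ideal, which is consistent with how the paper later applies its special case (Lemma 3.16 with $u\equiv\epsilon$) in the proof of Theorem 3.17.
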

	We will need the following special version of this Lemma.
	\begin{lemma}\label{Lem3.11}
		Suppose $\epsilon>0$ and $\{a_1,a_2,...,a_n\}$ is a finite subset of $X$$\setminus\bigcap Z[I]$. Then for each $i\in\{1,2,...,n\}$, there exists $t_i\in I$ such that $|t_i|<\epsilon,\ t_i(a_i)=\frac{1}{2}\epsilon$ and $t_i(a_j)=0$ for all $j\neq i$.
	\end{lemma}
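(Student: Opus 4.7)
The plan is to obtain Lemma~\ref{Lem3.11} as an immediate specialization of the preceding lemma, taking the positive unit $u$ to be a suitable constant function. Specifically, since $\epsilon>0$ is a strictly positive real number, the map $u:X\to\mathbb{R}$ defined by $u(x)=\epsilon$ for every $x\in X$ lies in $C(X)$ and is a positive unit, with multiplicative inverse the constant function $\tfrac{1}{\epsilon}$.

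Applying the preceding lemma to this particular $u$ and to the given finite subset $\{a_1,a_2,\ldots,a_n\}$ of $X\setminus\bigcap Z[I]$ yields, for each $i\in\{1,2,\ldots,n\}$, a function $t_i\in I$ satisfying $|t_i|<u$, $t_i(a_i)=\tfrac{1}{2}u(a_i)$, and $t_i(a_j)=0$ for all $j\neq i$. Substituting $u\equiv\epsilon$ into these three conditions converts them verbatim into $|t_i|<\epsilon$ on $X$, $t_i(a_i)=\tfrac{1}{2}\epsilon$, and $t_i(a_j)=0$ for $j\neq i$, which is the assertion to be proved.

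There is no genuine obstacle in this proof; the only thing worth noting is the harmless verification that a strictly positive real constant is indeed a positive unit in $C(X)$. The lemma is recorded separately from the preceding one purely for convenience in the forthcoming arguments on local compactness of $C_{U^I}(X)$, where one wants to work with a scalar $\epsilon$ (matching the basic neighbourhoods $\widetilde{B}(f,I,\epsilon)$ of the $U^I$-topology) rather than with an arbitrary positive unit in $C(X)$ (which is natural for the finer $m^I$-topology).
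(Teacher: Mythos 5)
Your proposal is correct and matches the paper's intent exactly: the paper states this lemma as a ``special version'' of the preceding one and omits the proof precisely because it follows by taking $u$ to be the constant positive unit $\epsilon$, which is what you do. Nothing further is needed.
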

	\begin{theorem}\label{Th3.12}
		For an ideal $I$ in $C(X)$, the following three statements are equivalent:
		\begin{enumerate}
			\item $C_{U^I}(X)$ is nowhere locally compact.
			\item $C_{m^I}(X)$ is nowhere locally compact.
			\item $X\setminus\bigcap Z[I]$ is an infinite set.
		\end{enumerate}
	\end{theorem}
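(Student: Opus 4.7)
The plan is to prove the three-way equivalence by establishing $(3)\Rightarrow(1)$, $(3)\Rightarrow(2)$, and $\lnot(3)\Rightarrow\lnot(1)\wedge\lnot(2)$. Since both $C_{U^I}(X)$ and $C_{m^I}(X)$ are topological groups and hence homogeneous, it suffices in each case to decide local compactness at the single point $0$.

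For $(3)\Rightarrow(1)$, suppose $X\setminus\bigcap Z[I]$ contains an infinite sequence $\{a_n\}_{n\in\mathbb{N}}$ of distinct points. Fix any $\epsilon>0$, and for each $n$ apply Lemma~\ref{Lem3.11} to $\{a_1,\dots,a_n\}$ (with the parameter replaced by $\epsilon/2$) to extract $s_n\in I$ with $|s_n|<\epsilon/2$ on $X$, $s_n(a_n)=\epsilon/4$, and $s_n(a_j)=0$ for $j<n$; each $s_n$ then lies in $\widetilde{B}(0,I,\epsilon)$. I would argue that no $U^I$-neighborhood $V$ of $0$ has compact closure: since $V\supseteq\widetilde{B}(0,I,\epsilon)$ for some $\epsilon$, the sequence $(s_n)$ sits in $V$, and if $\overline{V}$ were compact then $(s_n)$ viewed as a net would have a cluster point $h\in\overline{V}$. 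The neighborhood $\widetilde{B}(h,I,\epsilon/16)$ of $h$ would then contain $s_{n_1}$ and $s_{n_2}$ for some $n_1<n_2$, forcing $\sup_{x\in X}|s_{n_1}(x)-s_{n_2}(x)|<\epsilon/8$; but evaluating at $a_{n_1}$ gives $|s_{n_1}(a_{n_1})-s_{n_2}(a_{n_1})|=\epsilon/4$, a contradiction.

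The implication $(3)\Rightarrow(2)$ follows by the same construction, with $\epsilon$ replaced throughout by an arbitrary positive unit $u\in C(X)$ and Lemma~\ref{Lem3.11} replaced by its unit-version (the lemma of~\cite{Azarpanah} reproduced immediately before Lemma~\ref{Lem3.11}). The analogous $s_n$ satisfies $|s_n|<u/2$, $s_n(a_n)=u(a_n)/4$, $s_n(a_j)=0$ for $j<n$, so $s_n\in\widetilde{B}(0,I,u)$ in the $m^I$-sense; the cluster-point contradiction is obtained as before by evaluating at $a_{n_1}$, yielding the impossibility $u(a_{n_1})/4<u(a_{n_1})/8$.

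For $\lnot(3)\Rightarrow\lnot(1)\wedge\lnot(2)$, suppose $X\setminus\bigcap Z[I]=\{a_1,\dots,a_k\}$ is finite. Every $g\in I$ vanishes on $\bigcap Z[I]$, so the $\mathbb{R}$-linear evaluation map $\Phi\colon I\to\mathbb{R}^k$, $g\mapsto(g(a_1),\dots,g(a_k))$, is a bijection onto the linear subspace $\Phi(I)\subseteq\mathbb{R}^k$. For $g,g'\in I$ one has $\sup_{x\in X}|g-g'|=\max_i|g(a_i)-g'(a_i)|$, and for any positive unit $u$ the condition $|g-g'|<u$ on $X$ reduces to the finitely many inequalities $|g(a_i)-g'(a_i)|<u(a_i)$; hence $\Phi$ is a homeomorphism from $I$ (with either the $U^I$- or the $m^I$-subspace topology) onto $\Phi(I)$. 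Since $I$ is clopen in both topologies by Theorem~\ref{Th3.7}(\ref{Th3.7a}), the closure of the basic neighborhood $V=\widetilde{B}(0,I,1)$ of $0$ corresponds under $\Phi$ to the compact set $\Phi(I)\cap[-1,1]^k$, yielding a compact neighborhood of $0$ in each topology and negating (1) and (2). The main obstacle to the plan is the possible non-metrizability of $C_{U^I}(X)$; this is handled by running the cluster-point step with $(s_n)$ treated as a net, since every net in a compact space has a cluster point.
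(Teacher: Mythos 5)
Your proof is correct and follows essentially the same route as the paper's: Lemma~\ref{Lem3.11} is used to manufacture an infinite, uniformly separated family of functions of $I$ inside any basic neighbourhood of a point, which is incompatible with compactness, and in the finite case the evaluation map at the points of $X\setminus\bigcap Z[I]$ identifies the clopen subspace $I$ with a Euclidean space. The only differences are cosmetic: the paper obtains the compactness contradiction by covering a putative compact neighbourhood with finitely many $\frac{\epsilon}{4}$-balls and applying the pigeonhole principle rather than extracting a cluster point of a net, and it disposes of $(2)\Leftrightarrow(3)$ by citing Theorem 4.2 of~\cite{Azarpanah} instead of rerunning the argument with a positive unit $u$ in place of $\epsilon$.
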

	\begin{proof}
		The equivalence of the statements $(2)$ and $(3)$ is precisely Theorem $4.2$ in~\cite{Azarpanah}. So we shall establish the equivalence of $(1)$ and $(3)$. The proof for this later equivalence will be a close adaption of the proof of Theorem $4.2$ in~\cite{Azarpanah}. However we shall make a sketch of this proof in order to make the paper self contained. First assume that $X\setminus\bigcap Z[I]$ is an infinite set. If possible let $K$ be a compact subset of $C_{U^I}(X)$ with non-empty interior. Then there exists $f\in C(X)$ and $\epsilon>0$ in $\mathbb{R}$ such that $\widetilde{B}(f,I,\epsilon)\subseteq K$. The compactness of $K$ in $C_{U^I}(X)$ implies that $K\subseteq\bigcup\limits_{i=1}^n \widetilde{B}(g_i,I,\frac{\epsilon}{4})$ for a suitable finite subset $\{g_1,g_2,...,g_n\}$ of $K$. Since $X\setminus\bigcap Z[I]$ is an infinite set, we can pick up $(n+1)$-many distinct members $\{a_1,a_2,...,a_{n+1}\}$ from this set. On using Lemma~\ref{Lem3.11}, we can find out for each $i\in\{1,2,...,n+1\}$, a function $t_i\in I$ such that $|t_i|<\epsilon,\ t_i(a_i)= \frac{\epsilon}{2}$ and $t_i(a_j)=0$ if $j\neq i, j\in\{1,2,...,n+1\}$. Set $k_i=f+t_i,i=1,2,...,n+1$. Then for each $i=1,2,...,n+1, k_i\in \widetilde{B}(f,I,\epsilon)\subset K\subseteq\bigcup\limits_{i=1}^n \widetilde{B}(g_i,I,\frac{\epsilon}{4})$, so there exists distinct $p,q\in\{1,2,...,n+1\}$ for which $k_p$ and $k_q$ lie in $\widetilde{B}(g_i,I,\frac{\epsilon}{4})$ for some $i\in\{1,2,...,n\}$. This implies that $|k_p-k_q|<\frac{\epsilon}{2}$, while $|k_p(a_p)-k_q(a_p)|=\frac{\epsilon}{2}$ -- a contradiction. Thus $(3)\implies(1)$ is established. If $X\setminus\bigcap Z[I]$ is a finite set, say the set $\{b_1,b_2,...,b_k\}$, then by proceeding analogously as in the proof of Lemma $4.1(b)$ in~\cite{Azarpanah}, we can easily show that $\mathbb{R}^k$ is homeomorphic to the subspace $I$ of the space $C_{U^I}(X)$. From Theorem~\ref{Th3.7}(\ref{Th3.7a}), we get that $I$ is an open subspace of $C_{U^I}(X)$. Hence the space $C_{U^I}(X)$ becomes locally compact at each point on $I$. Consequently $C_{U^I}(X)$ is locally compact at each point on $X$ (Mind that $C_{U^I}(X)$ is a homogeneous space).
	\end{proof}
	\begin{corollary}
		$C_U(X)$ is nowhere locally compact if and only if $C_m(X)$ is nowhere locally compact if and only if $X$ is an infinite set.
	\end{corollary}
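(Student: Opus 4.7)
The plan is to deduce this corollary as the specialization of Theorem~\ref{Th3.12} obtained by taking $I=C(X)$. First I would verify that, under this choice, the two topologies involved collapse to the classical ones: by the definitions of $\widetilde{B}(f,I,\epsilon)$ and of a typical basic open set in the $m^I$-topology, requiring the membership $f-g\in I=C(X)$ is automatic, so the bases reduce to those of the $U$-topology and the $m$-topology respectively. Hence $C_{U^I}(X)=C_U(X)$ and $C_{m^I}(X)=C_m(X)$.

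Next I would identify the set $X\setminus\bigcap Z[I]$ when $I=C(X)$. Since the constant function $1$ belongs to $C(X)$ and $Z(1)=\emptyset$, we have $\bigcap Z[C(X)]=\emptyset$, so $X\setminus\bigcap Z[I]=X$. Plugging these identifications into the three equivalent conditions of Theorem~\ref{Th3.12} translates them verbatim into: $C_U(X)$ is nowhere locally compact iff $C_m(X)$ is nowhere locally compact iff $X$ is infinite. There is no genuine obstacle here; the entire content of the corollary is carried by Theorem~\ref{Th3.12}, and the only thing to check is the two bookkeeping identifications above.
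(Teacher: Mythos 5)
Your proposal is correct and matches the paper's (implicit) argument exactly: the corollary is simply Theorem~\ref{Th3.12} specialized to $I=C(X)$, and your two bookkeeping checks --- that the condition $f-g\in C(X)$ is vacuous so $C_{U^I}(X)=C_U(X)$ and $C_{m^I}(X)=C_m(X)$, and that $\bigcap Z[C(X)]=\emptyset$ so $X\setminus\bigcap Z[I]=X$ --- are precisely what is needed.
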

	A sufficient condition for the nowhere local compactness of $C_{U^I}(X)$ is given as follows:
	\begin{theorem}
		If $I\not\subset C^*(X)$, then $C_{U^I}(X)$ is nowhere locally compact $[$compare with an analogous fact concerning $C_{m^I}(X)$ in Corollary $4.4$~\cite{Azarpanah}$]$.
	\end{theorem}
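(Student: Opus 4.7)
The plan is to prove the contrapositive together with an appeal to Theorem~\ref{Th3.12}. Specifically, I would show: if $X\setminus\bigcap Z[I]$ is a finite set, then $I\subseteq C^*(X)$. Combining this contrapositive with the equivalence $(1)\iff (3)$ in Theorem~\ref{Th3.12} immediately gives the conclusion.

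The argument for the contrapositive is short. For any $f\in I$ we have $Z(f)\in Z[I]$, so $\bigcap Z[I]\subseteq Z(f)$; equivalently, $f$ vanishes on $\bigcap Z[I]$. Hence the set where $f$ can possibly take a nonzero value is contained in $X\setminus\bigcap Z[I]$. If the latter is a finite set, say $\{b_1,\dots,b_k\}$, then $f$ takes at most the $k$ values $f(b_1),\dots,f(b_k)$ as its nonzero values, so $|f|\leq \max_i|f(b_i)|$ on $X$, which places $f$ in $C^*(X)$. Thus the hypothesis $I\not\subseteq C^*(X)$ forces $X\setminus\bigcap Z[I]$ to be infinite.

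Once $X\setminus\bigcap Z[I]$ is known to be infinite, the equivalence of $(1)$ and $(3)$ in Theorem~\ref{Th3.12} yields directly that $C_{U^I}(X)$ is nowhere locally compact, completing the proof.

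There is essentially no obstacle here, since the heavy lifting (the equivalence with infiniteness of $X\setminus\bigcap Z[I]$) has already been carried out in Theorem~\ref{Th3.12}; the present theorem only needs the simple observation that membership in $I$ forces vanishing on $\bigcap Z[I]$, which turns a finite exterior into automatic boundedness.
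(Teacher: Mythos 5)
Your proposal is correct and follows exactly the paper's route: the paper also deduces from $I\not\subset C^*(X)$ that $X\setminus\bigcap Z[I]$ is infinite (asserting this as clear) and then invokes Theorem~\ref{Th3.12}. Your only addition is to spell out the contrapositive — that every $f\in I$ vanishes on $\bigcap Z[I]$ and hence is bounded when the complement is finite — which is a valid elaboration of the step the paper leaves implicit.
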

	\begin{proof}
		It is clear that $I\not\subset C^*(X)\implies X\setminus\bigcap Z[I]$ is an infinite set. It follows from Theorem~\ref{Th3.12} that $C_{U^I}(X)$ is nowhere locally compact.
	\end{proof}
	The following simple example shows that the converse of the last statement is not true.
	\begin{example}
		Take $X=\mathbb{R}$ and $I=C_K(\mathbb{R})$. Then $I\subset C^*(X)$, but $\bigcap\limits_{f\in C_K(\mathbb{R})}Z(f)=\emptyset$ and therefore $\mathbb{R}\setminus \bigcap Z[I]=\mathbb{R}=$ an infinite set. Hence from Theorem~\ref{Th3.12}, $C_{U^I}(\mathbb{R})$ is nowhere locally compact, though $I\subset C^*(\mathbb{R})$.
	\end{example}
	For an essential ideal $I$ in $C(X)$ [$I$ is called an essential ideal in $C(X)$ if $I\neq(0)$ and every non-zero ideal in $C(X)$ cuts $I$ non-trivially], the following fact is a simple characterization of nowhere local compactness of $C_{U^I}(X)$.
	\begin{theorem}
		Let $I$ be an essential ideal in $C(X)$. Then $C_{U^I}(X)$ is nowhere locally compact if and only if $X$ is an infinite set.
	\end{theorem}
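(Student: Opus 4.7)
The plan is to reduce the statement directly to Theorem~\ref{Th3.12}, which already asserts that $C_{U^I}(X)$ is nowhere locally compact if and only if $X\setminus\bigcap Z[I]$ is infinite. So under the hypothesis that $I$ is essential, it suffices to show that $X$ is infinite if and only if $X\setminus\bigcap Z[I]$ is infinite.

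One direction is immediate and uses no essentiality: if $X$ is finite, then trivially $X\setminus\bigcap Z[I]$ is finite, and Theorem~\ref{Th3.12} forbids $C_{U^I}(X)$ from being nowhere locally compact. Contrapositively, nowhere local compactness forces $X$ to be infinite.

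For the converse, I would invoke the standard characterization that an ideal $I$ of $C(X)$ is essential if and only if $\bigcap Z[I]$ has empty interior in $X$. Assuming this, suppose $X$ is infinite and, for contradiction, that $F:=X\setminus\bigcap Z[I]$ is finite. Since $X$ is Hausdorff, $F$ is closed, so $\bigcap Z[I]=X\setminus F$ is open, and it is non-empty because $X$ is infinite while $F$ is finite. Thus $\bigcap Z[I]$ has non-empty interior, contradicting the essentiality of $I$. Hence $X\setminus\bigcap Z[I]$ is infinite, and a second appeal to Theorem~\ref{Th3.12} closes the argument.

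The only real obstacle is the characterization of essential ideals just cited. If it needs to be proved from scratch, the forward implication runs by contrapositive: if $U:=\mathrm{int}(\bigcap Z[I])\neq\emptyset$, use complete regularity of $X$ to pick a non-zero $g\in C(X)$ with support in $U$; then any $f=gh\in (g)\cap I$ must vanish on $U$ (since $f\in I$ vanishes on $\bigcap Z[I]\supseteq U$) and also vanish off $U$ (since $g$ does), so $f\equiv 0$, showing $(g)\cap I=(0)$ and $I$ is not essential. Conversely, if $\bigcap Z[I]$ has empty interior and $J$ is any non-zero ideal with $0\neq g\in J$, the non-empty open set $X\setminus Z(g)$ cannot be contained in $\bigcap Z[I]$, so some $x_0\in X\setminus Z(g)$ satisfies $f(x_0)\neq 0$ for a suitable $f\in I$; then $fg\in I\cap J$ is non-zero, establishing that $I$ is essential.
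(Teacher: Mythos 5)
Your proposal is correct and follows essentially the same route as the paper: both reduce the statement to Theorem~\ref{Th3.12} via the fact (Proposition 2.1 of the cited Azarpanah paper) that an essential ideal $I$ has $\bigcap Z[I]$ nowhere dense, which for the closed set $\bigcap Z[I]$ is exactly your empty-interior condition. The paper phrases the final step as density of $X\setminus\bigcap Z[I]$ in $X$ while you argue by contradiction from a finite closed complement, but these are the same observation; your optional from-scratch proof of the essentiality characterization is also sound, though the paper simply cites it.
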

	\begin{proof}
		For the essential ideal $I$ in $C(X)$, $\bigcap Z[I]$ is nowhere dense [Proposition $2.1$,~\cite{Azarpanah1}] and hence $cl_X(X\setminus\bigcap Z[I])=X$. The desired result follows on using Theorem~\ref{Th3.12} in a straightforward manner.
	\end{proof}
	We would like to point out at this moment that it is mentioned in \cite{Azarpanah2} [the proof of the implication relation $(f)\implies(b)$ in Proposition $3.14$] and also in  \cite{Azarpanah} (the statement lying between Corollary $3.5$ and Corollary $3.6$) that whenever $C_\psi(X)\neq\{0\}$, then it is an essential ideal in $C(X)$. The following counterexample shows that there exists a non-zero $C_\psi(X)$ in $C(X)$, which is not an essential ideal in $C(X)$.
	\begin{example}
		Consider the following subspace of $\mathbb{R}: X=\{0\}\cup \{x\in\mathbb{R}:x\text{ is rational and }1\leq x\leq 2\}$. Then $X$ is locally compact at the point $0$ and therefore $C_K(X)\neq\{0\}$, because for a space $Y$, $C_K(Y)$ is $\{0\}$ if and only if $Y$ is nowhere locally compact [This follows on adapting the arguments in $4D2$~\cite{Gillman}, more generally for a nowhere locally compact space $Y$ instead of $\mathbb{Q}$ only]. Since $X$ is a metrizable space, there is no difference between compact and pseudocompact subsets of $X$. Hence $C_\psi(X)=C_K(X)\neq\{0\}$. It is clear that if $f\in C_K(X)$, then $f$ vanishes at each point on $X\setminus\{0\}$. Consequently $\bigcap\limits_{f\in C_K(X)}Z(f)=[1,2]\cap\mathbb{Q}$, which being a non-empty clopen set in the space $X$ is not nowhere dense. Hence on using Proposition $2.1$ in~\cite{Azarpanah1}, $C_K(X)$ is not an essential ideal in $C(X)$. 
	\end{example}
	It is proved in Proposition $4.6$ in~\cite{Azarpanah} that a non-zero ideal $I$ in $C(X)$ is never compact in $C_{m^I}(X)$ and if such an $I$ is Lindel\"{o}f, then $I\subseteq C_\psi(X)$. These two facts can be deduced from the following proposition, because the $m^I$-topology is finer than the $U^I$-topology.
	\begin{theorem}
		Let $J$ be a non-zero ideal in $C(X)$. Then:
		\begin{enumerate}\label{Th3.23}
			\item $J$ is not compact in $C_{U^I}(X)$.\label{Th3.23(1)}
			\item If $J$ is Lindel\"{o}f in $C_{U^I}(X)$, then $J\subseteq C_\psi(X)$.\label{Th3.23(2)}
		\end{enumerate}
	\end{theorem}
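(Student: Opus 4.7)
The plan for part (\ref{Th3.23(1)}) is to produce an open cover of $J$ admitting no finite subcover, by exploiting the sup-norm separation of the integer scalar multiples of a fixed non-zero element. Choose $f\in J$ with $f\neq 0$, and $x_0\in X$ with $\alpha:=|f(x_0)|>0$; since $J$ is an ideal, the collection $\{nf:n\in\mathbb{N}\}\subseteq J$ is $\alpha$-separated in the sup metric. I would cover $J$ by $\{\widetilde{B}(g,I,\alpha/3)\cap J : g\in J\}$. A hypothetical finite subcover $\widetilde{B}(g_1,I,\alpha/3),\ldots,\widetilde{B}(g_k,I,\alpha/3)$ would, via the pigeonhole principle, place two distinct $n_1f,n_2f$ into the same $\widetilde{B}(g_i,I,\alpha/3)$, and the triangle inequality for $\sup|\cdot|$ would then give $|n_1f(x_0)-n_2f(x_0)|\leq\sup|n_1f-g_i|+\sup|g_i-n_2f|<\tfrac{2\alpha}{3}$, contradicting the $\alpha$-separation. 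Hence $J$ is not compact in $C_{U^I}(X)$.

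For part (\ref{Th3.23(2)}) I would first invoke the fact that $C_\psi(X)$ is the largest ideal of $C(X)$ contained in $C^*(X)$ (Theorem $3.8$,~\cite{Azarpanah2}, as recalled in the proof of Theorem~\ref{Th3.5}): for an ideal $J$ this makes $J\subseteq C_\psi(X)$ equivalent to $J\subseteq C^*(X)$. Arguing the contrapositive, I assume $J\not\subseteq C^*(X)$ and pick an unbounded $f\in J$, so that $\mathbb{R}f\subseteq J$ has cardinality $\mathfrak{c}$. For $r_1\neq r_2$, the function $(r_1-r_2)f$ is unbounded, so $\sup_{x\in X}|r_1f(x)-r_2f(x)|=\infty$ and hence $r_2f\notin\widetilde{B}(r_1f,I,1)$; this makes $\mathbb{R}f$ discrete as a subspace of $C_{U^I}(X)$. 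Next I would verify that $\mathbb{R}f$ is closed in $J$. Given $g\in J\setminus\mathbb{R}f$, if $g\notin\mathbb{R}f+I$ then $rf-g\notin I$ for every $r$ and $\widetilde{B}(g,I,1)\cap\mathbb{R}f=\emptyset$; otherwise $g=sf+i$ with $0\neq i\in I$, and the unboundedness of $f$ guarantees that $\sup|(r-s)f-i|$ is finite for at most one value $r_0$ (else the difference $(r_1-r_2)f$ would be bounded), and that this exceptional value, if present, satisfies $\sup|r_0f-g|>0$ because $r_0f=g$ would place $g$ in $\mathbb{R}f$. Choosing $\epsilon$ strictly smaller than this positive lower bound over $r$ then yields $\widetilde{B}(g,I,\epsilon)\cap\mathbb{R}f=\emptyset$. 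Since $\mathbb{R}f$ is thus an uncountable closed discrete subspace of $J$, and the Lindel\"of property is inherited by closed subspaces while uncountable discrete spaces fail it, $J$ cannot be Lindel\"of.

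The only point requiring genuine care is the closedness of $\mathbb{R}f$ in $J$, and specifically the subcase $f\in I$: there the ``$I$-coset'' condition $rf-g\in I$ no longer pins down $r$, so one must lean entirely on the sup-norm estimate. The saving observation, pinned down above, is that the unboundedness of $f$ allows $\sup|rf-g|$ to be finite for at most one $r$, and forces that value to be positive whenever $g\notin\mathbb{R}f$. Everything else is a faithful adaptation of the classical ``infinite (respectively, uncountable) closed discrete subspace obstructs compactness (respectively, Lindel\"ofness)'' theme to the $U^I$-setting, which is available precisely because the basic $\widetilde{B}$-neighborhoods retain the sup-norm separation that drives the standard proofs in $C_U(X)$.
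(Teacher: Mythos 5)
Your proposal is correct, and it follows essentially the route the paper intends: the paper omits the proof, deferring to Proposition $4.6$ of~\cite{Azarpanah}, and your argument is precisely that argument transplanted to the $U^I$-setting --- an infinite uniformly separated family $\{nf\}$ defeats compactness, and for an unbounded $f\in J$ the uncountable set $\mathbb{R}f$ is closed and discrete (since $\sup|rf-g|$ can be finite for at most one $r$), which defeats the Lindel\"{o}f property unless $J\subseteq C^*(X)$, equivalently $J\subseteq C_\psi(X)$. Your handling of the closedness of $\mathbb{R}f$ via the sup-norm estimate alone (rather than the $I$-coset condition) is exactly the right care to take, so there is nothing to add.
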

	We omit the proof of this Theorem, because this can be done on closely following the arguments for the proof of Proposition $4.6$ in~\cite{Azarpanah}.
	\section{A few special properties for the space $C_U(X)$}\label{Sec4}
	If $U(X)$ is dense in $C_m(X)$, then it is plain that $U(X)$ is dense in $C_U(X)$, because the $U$-topology on $C(X)$ is weaker than the $m$-topology. We are going to show that the converse of this statement is true. We recall in this context that a space $X$ is strongly zero-dimensional if given a pair of completely separated sets $K$ and $W$ in $X$, there exists a clopen set $C'$ such that $K\subseteq C'\subseteq X\setminus W$. Equivalently $X$ is strongly zero-dimensional if and only if given a pair of disjoint zero-sets $Z$ and $Z'$ in $X$, there exists a clopen set $C$ in $X$ such that $Z\subseteq C\subseteq X\setminus Z'$. The following lemma gives a sufficient condition for the strongly zero-dimensionality of $X$. 
	\begin{lemma}\label{Lem4.1}
		Let $U(X)$ be dense in $C_U(X)$. Then $X$ is strongly zero-dimensional.
	\end{lemma}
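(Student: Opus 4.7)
The plan is to use the characterization of strong zero-dimensionality in terms of disjoint zero-sets, together with the observation that a unit of $C(X)$ is precisely a continuous function that never vanishes, so its sign is locally constant.

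Fix disjoint zero-sets $Z=Z(f_1)$ and $Z'=Z(f_2)$ in $X$. Since disjoint zero-sets in a completely regular space are completely separated, I can produce a function $h\in C(X)$ with $0\leq h\leq 1$, $h\equiv 0$ on $Z$ and $h\equiv 1$ on $Z'$. Shift it to $f_0=h-\tfrac{1}{2}\in C^*(X)$, so that $f_0\equiv -\tfrac{1}{2}$ on $Z$ and $f_0\equiv \tfrac{1}{2}$ on $Z'$. Since $U(X)$ is dense in $C_U(X)$, the basic open set $U(f_0,\tfrac{1}{4})$ meets $U(X)$, giving a unit $u\in C(X)$ with $\sup_{x\in X}|u(x)-f_0(x)|<\tfrac{1}{4}$.

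By the choice of $f_0$, this forces $u(x)<-\tfrac{1}{4}$ for every $x\in Z$ and $u(x)>\tfrac{1}{4}$ for every $x\in Z'$. Because $u$ is a unit, $Z(u)=\emptyset$, so $u$ has no zeros on $X$. Consequently the set
\[
C=\{x\in X: u(x)<0\}=u^{-1}\bigl((-\infty,0)\bigr)=u^{-1}\bigl((-\infty,0]\bigr)
\]
is simultaneously open and closed in $X$. From the sign information above, $Z\subseteq C$ and $C\cap Z'=\emptyset$, i.e., $Z\subseteq C\subseteq X\setminus Z'$. Since $Z$ and $Z'$ were arbitrary disjoint zero-sets, the zero-set version of strong zero-dimensionality recalled just before the lemma is satisfied, so $X$ is strongly zero-dimensional.

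I do not expect a genuine obstacle here: the only delicate point is ensuring that the approximation of $f_0$ by a unit is in the $U$-topology (i.e., uniform on all of $X$), which is exactly what the hypothesis ``$U(X)$ dense in $C_U(X)$'' provides, together with the fact that $f_0$ is bounded so that a uniform tolerance of $\tfrac{1}{4}$ is meaningful and separates the two prescribed values $\pm\tfrac{1}{2}$ by sign.
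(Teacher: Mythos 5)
Your proof is correct and follows essentially the same route as the paper: separate the two disjoint zero-sets by a bounded continuous function taking distinct constant values on them, uniformly approximate it by a unit $u$ closely enough to preserve the signs, and take $C=\{x\in X:u(x)<0\}$, which is clopen because $u$ never vanishes. The only difference from the paper's version is a harmless rescaling (the paper uses values $\pm 1$ with tolerance $\tfrac12$ instead of $\pm\tfrac12$ with tolerance $\tfrac14$).
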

	\begin{proof}
		Let $Z_1, Z_2$ be disjoint zero-sets in $X$. Then there exists $f\in C(X)$ such that $|f|\leq 1$, $f(Z_1)=\{-1\}$ and $f(Z_2)=\{1\}$. Since $U(X)$ is dense in $X$, we can find out a member $u\in\widetilde{B}(f,\frac{1}{2})\cap U(X)$. Let $C=\{x\in X:u(x)<0\}$. Then $C$ is a clopen set in $X$, $Z_1\subseteq C\subseteq X\setminus Z_2$. Thus $X$ becomes strongly zero-dimensional.
	\end{proof}
	\begin{theorem}\label{Th4.2}
		The following statements are equivalent for a space $X$.
		\begin{enumerate}
			\item $X$ is strongly zero-dimensional.
			\item $U(X)$ is dense in $C_U(X)$.
			\item $U(X)$ is dense in $C_m(X)$.
		\end{enumerate}
	\end{theorem}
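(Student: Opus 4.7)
The plan is to complete the triangle of implications. The implication (3)$\Rightarrow$(2) is immediate from the opening observation that the $m$-topology refines the $U$-topology, and (2)$\Rightarrow$(1) is Lemma~\ref{Lem4.1}; the only substantive step is (1)$\Rightarrow$(3), which I would handle by an explicit construction of units close to a given function in the $m$-topology.

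Given $f \in C(X)$ and a positive unit $u \in C(X)$, the goal is to produce $v \in U(X)$ with $|v(x) - f(x)| < u(x)$ for every $x \in X$. I would form the two sets
\[
Z_+ = \{x \in X : f(x) \geq u(x)/3\} \quad \text{and} \quad Z_- = \{x \in X : f(x) \leq -u(x)/3\},
\]
which are zero-sets (of $\max(u/3 - f, 0)$ and $\max(f + u/3, 0)$ respectively) and are disjoint because $u$ is strictly positive everywhere. By the strong zero-dimensionality of $X$, applied in the zero-set formulation recalled just before Lemma~\ref{Lem4.1}, there exists a clopen set $C$ with $Z_+ \subseteq C \subseteq X \setminus Z_-$. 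Letting $\sigma : X \to \{-1, +1\}$ denote the continuous function equal to $+1$ on $C$ and $-1$ on $X \setminus C$, I would set $v = f + \tfrac{2u}{3}\sigma$. Then $|v - f| = 2u/3 < u$ pointwise, so $v$ lies in the basic $m$-neighbourhood of $f$ determined by $u$.

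A short case check then shows $v$ vanishes nowhere: on $C$, where $x \notin Z_-$ forces $f(x) > -u(x)/3$, one has $v(x) > u(x)/3 > 0$; on $X \setminus C$, where $x \notin Z_+$ forces $f(x) < u(x)/3$, one has $v(x) < -u(x)/3 < 0$. Hence $v \in U(X)$, and (1)$\Rightarrow$(3) follows.

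The main obstacle is simply spotting the right construction. One must see that strong zero-dimensionality should be applied to a pair of zero-sets carved out of $f$ and $u$ so that the separating clopen set can be used to define a sign perturbation that is simultaneously small enough (scaling strictly smaller than $u$) to stay within the $m$-neighbourhood of $f$ and large enough (exceeding the threshold $u/3$) to push $v$ strictly off zero on both halves of the clopen split. The choice of $u/3$ for the threshold and $2u/3$ for the perturbation is the most convenient way to balance these two demands.
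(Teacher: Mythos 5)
Your proposal is correct. The implication structure is sound: (3)$\Rightarrow$(2) from the comparison of topologies, (2)$\Rightarrow$(1) from Lemma~\ref{Lem4.1}, and your explicit construction for (1)$\Rightarrow$(3) works --- the sets $Z_+$ and $Z_-$ are indeed disjoint zero-sets, the clopen separator yields a continuous sign function $\sigma$, the perturbation $v=f+\tfrac{2u}{3}\sigma$ satisfies $|v-f|=\tfrac{2u}{3}<u$ pointwise, and the case check shows $Z(v)=\emptyset$, so $v$ is a unit. The difference from the paper is that the paper does not prove (1)$\Leftrightarrow$(3) at all: it simply cites Proposition 5.1 of the reference of Azarpanah, Manshoor and Mohamadian on the $m^I$-topology, and then closes the cycle exactly as you do with Lemma~\ref{Lem4.1} and the trivial comparison of the two topologies. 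Your version therefore only needs the single direction (1)$\Rightarrow$(3) rather than the full equivalence, and it makes the theorem self-contained at the cost of reproducing (in essence) the cited argument; the paper's version is shorter but leans on the literature. Either route is acceptable, and your write-up of the construction, including the choice of thresholds $u/3$ and $2u/3$, is complete and accurate.
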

	\begin{proof}
		The equivalence of $(1)$ and $(3)$ is precisely the Proposition 5.1 in~\cite{Azarpanah}. This combined with Lemma~\ref{Lem4.1} finishes the proof.
	\end{proof}
	Let $U^*(X)=\{u\in C(X):|u|>\lambda\text{ for some }\lambda>0\}$.
	\begin{theorem}\label{Th4.3}
		$cl_UD(X)(\equiv\text{ the closure of }D(X)\text{ in the space }C_U(X))=C(X)\setminus U^*(X)$ $[$compare with the fact: $cl_mD(X)=C(X)\setminus U(X)$ in Proposition $5.2$ in~\cite{Azarpanah}$]$.
	\end{theorem}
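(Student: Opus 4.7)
The plan is to verify the two set-inclusions separately. For the direction $cl_UD(X) \subseteq C(X) \setminus U^*(X)$, I will show that every $u \in U^*(X)$ has a basic uniform neighborhood $U(u, \lambda/2)$ that misses $D(X)$ entirely. For the reverse direction, I will show that any $f$ failing to be bounded away from $0$ can be uniformly approximated by functions whose zero-set has non-empty interior; via complete regularity of $X$, such functions are automatically zero divisors.

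For the first direction, fix $u \in U^*(X)$ with $|u| > \lambda$ on $X$ for some $\lambda > 0$. Any $g$ with $\sup_X|g - u| < \lambda/2$ satisfies $|g(x)| > \lambda/2 > 0$ for every $x \in X$, so $g$ has empty zero-set, is a unit in $C(X)$, and hence is non-zero and not a zero divisor. Thus $U(u, \lambda/2) \cap D(X) = \emptyset$, proving $u \notin cl_UD(X)$.

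For the reverse direction, take $f \in C(X) \setminus U^*(X)$; then $\inf_X |f| = 0$, so for every $\epsilon > 0$ the open set $A = \{x \in X : |f(x)| < \epsilon/2\}$ is non-empty. Define $h : \mathbb{R} \to \mathbb{R}$ by $h(t) = \mathrm{sgn}(t) \cdot \max\{0, |t| - \epsilon/2\}$, a continuous piecewise linear map satisfying $|h(t) - t| \leq \epsilon/2$ for all $t \in \mathbb{R}$. Setting $g = h \circ f$ gives $g \in C(X)$ with $\sup_X |g - f| \leq \epsilon/2 < \epsilon$, and $g \equiv 0$ on $A$. Since $A$ is a non-empty open subset of $Z(g)$, complete regularity of $X$ produces a non-zero $k \in C(X)$ supported in $A$; then $gk = 0$, so $g \in D(X)$. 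Hence $f \in cl_UD(X)$.

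The only mildly delicate step is the construction in the last paragraph: one needs an approximation of $f$ whose zero-set contains a non-empty open set while the sup-norm error is controlled. Composing $f$ with the truncation $h$ achieves both requirements simultaneously, and the remaining verifications (zero-divisor status of $g$, continuity of $h \circ f$, the $\epsilon/2$ error bound) are routine.
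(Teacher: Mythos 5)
Your proof is correct and follows essentially the same route as the paper: the $\lambda/2$-ball argument showing $U^*(X)$ is uniformly open, and the truncation $g=h\circ f$ (identical to the paper's piecewise-defined $h$) for the density of $D(X)$ in $C(X)\setminus U^*(X)$. The only difference is that you spell out, via complete regularity, why a nonempty open subset of $Z(g)$ makes $g$ a zero divisor, a step the paper leaves implicit.
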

	\begin{proof}
		It is easy to check that $U^*(X)$ is open in $C_U(X)$ because choosing $u\in  C^*(X)$, we have $|u|>\lambda$ for some $\lambda>0$, this implies that $\widetilde{B}(u,\frac{\lambda}{2})\subseteq U^*(X)$ (We are simply writing $\widetilde{B}(u,\frac{\lambda}{2}$) instead of $\widetilde{B}(u,C(X),\frac{\lambda}{2})$). Since $D(X)\cap U(X)=\emptyset$, in particular $D(X)\cap U^*(X)=\emptyset$, it follows therefore that $cl_UD(X)\subseteq C(X)\setminus U^*(X)$. To prove the reverse inclusion relation, let $f\in C(X)\setminus U^*(X)$ and $\epsilon>0$ be preassigned. We need to show that $\widetilde{B}(f,\epsilon)\cap D(X)\neq\emptyset$. For that purpose define as in the proof of Proposition $5.2$ in~\cite{Azarpanah}. 
		$$ h(x)=
		\begin{cases}
			f(x)+\frac{\epsilon}{2} &\ \text{if}\ f(x)\leq -\frac{\epsilon}{2}	\\
			0&\ \text{if}\ |f(x)|\leq \frac{\epsilon}{2} \\
			f(x)-\frac{\epsilon}{2}&\ \text{if}\ f(x)\geq \frac{\epsilon}{2}	\\
		\end{cases}$$
		Then $h\in C(X)$. Since $f\notin U^*(X),\ f$ takes values arbitrarily near to zero on $X$. Therefore there exists $x\in X$ for which $|f(x)|<\frac{\epsilon}{2}$. This implies that $int_XZ(h)\neq\emptyset$. Thus $h\in D(X)$ and surely $|h-f|<\epsilon$. Therefore $h\in \widetilde{B}(f,\epsilon)\cap D(X)$.
	\end{proof}
	\begin{definition}
		We call a space $X$, a weakly $P$-space if whenever $f\in C(X)$ is such that $f$ takes values arbitrarily near to zero, then $f$ vanishes on some neighborhood of a point in $X$, i.e., $int_XZ(f)\neq\emptyset $.
	\end{definition}
	It is clear that every weakly $P$-space is an almost $P$-space and is pseudocompact. The following proposition is a characterization of weakly $P$-spaces.
	\begin{theorem}
		$X$ is a weakly $P$-space if and only if $D(X)$ is closed in $C_U(X)$ $[$Compare with the Proposition 5.2 in~\cite{Azarpanah}$]$.
	\end{theorem}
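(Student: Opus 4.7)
The plan is to extract almost everything from Theorem~\ref{Th4.3}, which computes $cl_U D(X) = C(X)\setminus U^*(X)$. Given this, $D(X)$ is closed in $C_U(X)$ if and only if $D(X) = C(X)\setminus U^*(X)$, so it suffices to show that this set-theoretic equality is precisely the weakly $P$ condition on $X$.

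First I would record two standard translations. A function $f\in C(X)$ belongs to $D(X)$ if and only if $\mathrm{int}_X Z(f)\neq\emptyset$: one direction uses that if $g\neq 0$ and $fg=0$ then $Z(f)\supseteq X\setminus Z(g)$, a nonempty open set; the other direction uses complete regularity of $X$ to manufacture a nonzero $g\in C(X)$ supported inside $\mathrm{int}_X Z(f)$. Second, by the very definition of $U^*(X)$, $f\notin U^*(X)$ exactly when $\inf_{x\in X}|f(x)|=0$, i.e.\ when $f$ takes values arbitrarily near zero. The inclusion $D(X)\subseteq C(X)\setminus U^*(X)$ is automatic, since $\mathrm{int}_X Z(f)\neq\emptyset$ forces $f$ to vanish at a point and hence prevents $f$ from being bounded away from zero. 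The substantive content of the set-equality $D(X)=C(X)\setminus U^*(X)$ is therefore the reverse inclusion, which reads: every $f\in C(X)$ with values arbitrarily near zero satisfies $\mathrm{int}_X Z(f)\neq\emptyset$ -- and this is exactly the definition of a weakly $P$-space.

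For the forward implication, assuming $X$ is weakly $P$, I take an arbitrary $f\in C(X)\setminus U^*(X)$, apply the defining property of weakly $P$ to produce a nonempty open $V\subseteq Z(f)$, use complete regularity to obtain a nonzero $g\in C(X)$ supported in $V$, and conclude $fg=0$, so $f\in D(X)$. Combined with the trivial inclusion this yields $D(X) = C(X)\setminus U^*(X) = cl_U D(X)$ by Theorem~\ref{Th4.3}, so $D(X)$ is closed in $C_U(X)$. For the converse, if $D(X)$ is closed then Theorem~\ref{Th4.3} gives $D(X) = C(X)\setminus U^*(X)$; so any $f$ with $\inf|f|=0$ lies in $D(X)$, hence admits a nonzero $g$ with $fg=0$, and $X\setminus Z(g)$ is a nonempty open subset of $Z(f)$, establishing $\mathrm{int}_X Z(f)\neq\emptyset$ and thus that $X$ is weakly $P$.

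I do not anticipate any genuine obstacle: Theorem~\ref{Th4.3} has already done the analytic work of computing the closure, and the remaining argument is a dictionary-lookup between the ring-theoretic description of $D(X)$, the functional description of $U^*(X)$, and the definition of weakly $P$. The only point demanding a line of care is the equivalence between being a zero-divisor and having nonempty interior of the zero-set, which is immediate from the complete regularity of $X$.
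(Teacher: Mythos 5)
Your proposal is correct and follows essentially the same route as the paper: both reduce the statement, via Theorem~\ref{Th4.3}, to the set equality $D(X)=C(X)\setminus U^*(X)$ and then translate that equality into the weakly $P$ condition using the identification of zero-divisors with functions whose zero-set has nonempty interior. You merely spell out that last identification more explicitly than the paper does.
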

	\begin{proof}
		Let $X$ be a weakly $P$-space. This means that if $f\in C(X)$ is not a zero-divisor, then it is bounded away from zero, i.e., $f\in U^*(X)$. Thus $C(X)\setminus D(X)\subseteq U^*(X)$. The implication relation $U^*(X)\subseteq C(X)\setminus D(X)$ is trivial. Therefore $C(X)\setminus D(X)=U^*(X)$ and thus $D(X)=C(X)\setminus U^*(X)$. It follows from Theorem~\ref{Th4.3} that $D(X)$ is closed in $C_U(X)$. Conversely let $D(X)$ be closed in $C_U(X)$. Then this implies by Theorem~\ref{Th4.3} that $D(X)=C(X)\setminus U^*(X)$. Now let $f\in C(X)$ be such that $f$ takes values arbitrarily near to zero. We need to show that $int_{X}Z(f)\neq\emptyset$. If possible let $int_XZ(f)=\emptyset$. Then $f\notin D(X)$ and hence $f\in U^*(X)$ -- a contradiction.
	\end{proof}
	The next proposition shows that weakly $P$-spaces are special kind of almost $P$-spaces.
	\begin{theorem}
		$X$ is a weakly $P$-space if and only if it is pseudocompact and almost $P$.
	\end{theorem}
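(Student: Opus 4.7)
The plan is to prove both implications directly from the definitions, using the reciprocal trick $1/|f|$ to exchange "takes values arbitrarily near zero" with "unbounded" and thereby bring pseudocompactness into play.

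For the forward direction, assume $X$ is a weakly $P$-space. To deduce pseudocompactness, I would argue by contradiction: if $f \in C(X)$ is unbounded, consider $g = 1/(1+|f|) \in C(X)$. Then $g(x) > 0$ for every $x$, so $Z(g) = \emptyset$ and hence $\mathrm{int}_X Z(g) = \emptyset$; but $|f|$ unbounded forces $g$ to take values arbitrarily close to $0$, contradicting the weakly $P$ hypothesis. To deduce that $X$ is almost $P$, let $Z(f)$ be a non-empty zero-set. Since $f$ actually attains the value $0$, it certainly takes values arbitrarily close to $0$, so the weakly $P$ hypothesis immediately yields $\mathrm{int}_X Z(f) \neq \emptyset$.

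For the converse, assume $X$ is pseudocompact and almost $P$, and let $f \in C(X)$ take values arbitrarily near to zero; I need to produce a non-empty interior inside $Z(f)$. The key step is to rule out $Z(f) = \emptyset$: if $f$ never vanishes, then $1/|f|$ is a well-defined continuous function on $X$, and because $\inf_{x \in X} |f(x)| = 0$ the function $1/|f|$ is unbounded, contradicting pseudocompactness. Hence $Z(f) \neq \emptyset$, and the almost $P$ property now delivers $\mathrm{int}_X Z(f) \neq \emptyset$.

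I do not foresee a real obstacle here: both directions are short and each reduces to the single observation that, for a nowhere-zero continuous function, taking values arbitrarily close to $0$ is equivalent to $1/|f|$ being unbounded. The only point requiring minor care is to read the phrase \emph{takes values arbitrarily near to zero} as $\inf_{x \in X}|f(x)| = 0$ (equivalently $f \notin U^*(X)$), which is the interpretation consistent with the definition just given and with the use of $U^*(X)$ in Theorem~\ref{Th4.3}.
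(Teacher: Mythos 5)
Your proof is correct and follows essentially the same route as the paper: the converse direction is identical (pseudocompactness forces a function with infimum $0$ to actually vanish, then almost $P$ supplies the interior point), and your forward direction merely spells out the details that the paper dismisses with ``it is already settled.'' No gaps.
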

	\begin{proof}
		It is already settled that a weakly $P$-space is pseudocompact and almost $P$. Conversely let $X$ be pseudocompact and almost $P$. Suppose $f\in C(X)$ takes values arbitrarily near to zero on $X$. Then $f$ must attain the value $0$ at some point on $X$ because $X$ is pseudocompact. Thus $Z(f)\neq\emptyset$ and hence due to the almost $P$ property of $X$, we shall have $int_XZ(f)\neq \emptyset$. Therefore $X$ becomes weakly $P$.
	\end{proof}
	\begin{remark}
		$D(X)$ is closed in $C_U(X)$ if and only if $X$ is a pseudocompact almost $P$-space.
	\end{remark}
	There are enough examples of pseudocompact almost $P$-spaces. Indeed, if $X$ is a locally compact realcompact space, then  $\beta X\setminus X$ is a compact almost $P$-space [Lemma $3.1$,~\cite{Fine}].\\
	In what follows we compute the closure of a few related ideals in the ring $C(X)$.
	\begin{theorem}\label{Th4.8}
		$cl_UC_K(X)\ (\equiv\text{ the closure of }C_K(X)\text{ in the space }C_U(X) )=\{f\in C(X):f^*(\beta X\setminus X)=\{0\}\}$. 
	\end{theorem}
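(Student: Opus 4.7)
The plan is to prove the two inclusions separately, in both cases using the Stone extension $f^{*}$ as the bridge between $X$ and $\beta X$.

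For the forward inclusion $cl_{U}C_{K}(X)\subseteq\{f\in C(X):f^{*}(\beta X\setminus X)=\{0\}\}$, I would start by taking $f\in cl_{U}C_{K}(X)$ and, given $\epsilon>0$, choosing $g\in C_{K}(X)$ with $\sup_{x\in X}|f(x)-g(x)|<\epsilon$. Write $K=\mathrm{supp}(g)$, a compact subset of $X$. Since $g$ is bounded, so is $f$, and thus $f^{*}:\beta X\to\mathbb{R}$ is continuous. On $X\setminus K$ we have $g=0$, so $|f|<\epsilon$ there. Because $K$ is compact in $X$ it is closed in $\beta X$, so $\beta X\setminus K$ is open in $\beta X$, contains $\beta X\setminus X$, and $X\setminus K$ is dense in it. By continuity of $f^{*}$, the inequality $|f|\le\epsilon$ on $X\setminus K$ propagates to $|f^{*}|\le\epsilon$ on $cl_{\beta X}(X\setminus K)\supseteq\beta X\setminus X$. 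Letting $\epsilon\to 0$ gives $f^{*}\equiv 0$ on $\beta X\setminus X$.

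For the reverse inclusion, take $f\in C(X)$ with $f^{*}(\beta X\setminus X)=\{0\}$. The first subtask is to show $f\in C^{*}(X)$: since $f$ is finite on $X$ and $f^{*}=0$ on $\beta X\setminus X$, the extension $f^{*}$ is finite on all of $\beta X$; continuity on the compact space $\beta X$ then forces $f^{*}$ (hence $f$) to be bounded. The second and crucial subtask is to show $A_{\epsilon}:=\{x\in X:|f(x)|\ge\epsilon\}$ is compact for every $\epsilon>0$. Here I would argue as follows: if $p\in cl_{\beta X}A_{\epsilon}$, then by continuity of $f^{*}$, $|f^{*}(p)|\ge\epsilon>0$; hence $p\notin\beta X\setminus X$, so $p\in X$ and $|f(p)|\ge\epsilon$, i.e., $p\in A_{\epsilon}$. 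Thus $A_{\epsilon}=cl_{\beta X}A_{\epsilon}$ is a closed subset of the compact space $\beta X$, hence compact.

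With compactness of $A_{\epsilon}$ in hand, the approximation is routine: define
\[
g_{\epsilon}(x)=\mathrm{sgn}(f(x))\cdot\max\bigl(|f(x)|-\epsilon,\,0\bigr),
\]
which is continuous, satisfies $|f-g_{\epsilon}|\le\epsilon$ pointwise on $X$, and vanishes off $A_{\epsilon}$; hence $\mathrm{supp}(g_{\epsilon})\subseteq A_{\epsilon}$ is compact and $g_{\epsilon}\in C_{K}(X)$. Consequently $f\in cl_{U}C_{K}(X)$.

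I expect the compactness argument for $A_{\epsilon}$ to be the main conceptual step, since it crystallizes the content of the hypothesis $f^{*}(\beta X\setminus X)=\{0\}$; the truncation step and the dense-subset continuity argument on the other side are bookkeeping, but one must take care at the start of the forward direction to note that $f$ is actually bounded so that the Stone extension is $\mathbb{R}$-valued and classical continuity applies.
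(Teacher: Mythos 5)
Your proof is correct, but it follows a genuinely different and more self-contained route than the paper's. For the inclusion $cl_UC_K(X)\subseteq\{f\in C(X):f^*(\beta X\setminus X)=\{0\}\}$ the paper does not approximate at all: it writes $C_K(X)=\bigcap_{p\in\beta X\setminus X}O^p\subseteq\bigcap_{p\in\beta X\setminus X}M^p$ and invokes Plank's computation $cl_UM^p=\widetilde{M}^p=\{f:f^*(p)=0\}$ together with Gelfand--Kolmogorov, so that the right-hand side appears as an intersection of $U$-closed sets containing $C_K(X)$; your density-propagation argument (that $|f|\le\epsilon$ off the compact support of an approximant forces $|f^*|\le\epsilon$ on $\beta X\setminus X$) replaces this machinery with a first-principles continuity argument, and it is sound since $X\setminus K$ is indeed dense in the open set $\beta X\setminus K\supseteq\beta X\setminus X$. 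For the reverse inclusion both proofs use the same truncation (your $g_\epsilon$ is the paper's $h$), but you certify membership in $C_K(X)$ differently: the paper works in $\beta X$ with the auxiliary function $g=(|f|\wedge\frac{\epsilon}{2})-\frac{\epsilon}{2}$ and the characterization from $7E$ of Gillman--Jerison that $Z(h^\beta)$ be a neighborhood of $\beta X\setminus X$, whereas you show directly that $A_\epsilon=\{x:|f(x)|\ge\epsilon\}$ coincides with its $\beta X$-closure and is therefore compact, so $\mathrm{supp}(g_\epsilon)\subseteq A_\epsilon$ is compact. Your boundedness argument (the image of $f^*$ is a compact subset of $\mathbb{R}\cup\{\infty\}$ omitting $\infty$, hence a bounded subset of $\mathbb{R}$) is also cleaner than the paper's argument via a $C$-embedded copy of $\mathbb{N}$. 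What the paper's route buys is reusability: the identification $cl_UM^p=\widetilde{M}^p$ is recycled verbatim for $C_\psi(X)$ in Theorem 4.11, while your compactness-of-$A_\epsilon$ step has the bonus of essentially delivering $cl_UC_K(X)=C_\infty(X)$ (Remark 4.9) at no extra cost. The only cosmetic point: your $g_\epsilon$ satisfies $|f-g_\epsilon|\le\epsilon$ rather than the strict inequality defining $\widetilde{B}(f,\epsilon)$, so one should take $g_{\epsilon/2}$ when exhibiting a point of $\widetilde{B}(f,\epsilon)\cap C_K(X)$.
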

	\begin{proof}
		Set for each $p\in\beta X,\ \widetilde{M}^p=\{f\in C(X): f^*(p)=0\}$. Since $f\in M^p\implies p\in cl_{\beta X}Z(f)$ (Gelfand-Kolmogoroff Theorem) $\implies f^*(p)=0$, it follows that $M^p\subseteq\widetilde{M}^p$ for each $p\in\beta X$. Furthermore, $\widetilde{M}^p=\{f\in C(X):|M^p(f)|=0\text{ or infinitely small in the residue class field }C(X)\slash M^p\}$ [Theorem $7.6(b)$,~\cite{Gillman}]. It is well-known [vide~\cite{Plank}, Lemma $2.1$] that $cl_U M^p=\{f\in C(X):|M^p(f)|=0 \text{ or infinitely small}\}$. Hence we get that $M^p\subseteq cl_U M^p=\widetilde{M}^p$ for each $p\in\beta X$. Therefore $C_K(X)=\bigcap\limits_{p\in \beta X\setminus X}O^p$ [$7E$~\cite{Gillman}] $\subseteq\bigcap\limits_{p\in\beta X\setminus X}M^p\subseteq\bigcap\limits_{p\in \beta X\setminus X}\widetilde{M}^p=$ the intersection of a family of closed sets in $C_U(X)\equiv$ a closed set in $C_U(X)$. This implies that $cl_UC_K(X) \subseteq\bigcap\limits_{p\in\beta X\setminus X}\widetilde{M}^p=\{f\in C(X):f^*(\beta X\setminus X)=0\}$. To prove the reverse inclusion relation, let $f\in\bigcap\limits_{p\in \beta X\setminus X}\widetilde{M}^p $. Thus $f^*(\beta X\setminus X)=\{0\}$. Consequently then $f$ becomes bounded on $X$, for if $f$ is unbounded on $X$, then there exists a copy of $\mathbb{N}$, $C$-embedded in $X$ for which $\lim\limits_{n\to\infty}|f(x)|=\infty$. Surely then $cl_{\beta X}\mathbb{N}=\beta\mathbb{N}$ and so $cl_{\beta X}\mathbb{N}\setminus\upsilon X\supseteq\beta\mathbb{N}\setminus\mathbb{N}$ [We use the fact that a countable $C$-embedded subset of a Tychonoff space is a closed subset of it $3B3$~\cite{Gillman}]. Choose a point $p\in\beta\mathbb{N}\setminus\mathbb{N}$, It is clear that $f^*(p)=\infty$ -- a contradiction. Thus $f\in C^*(X)$ and we can write $f^\beta(\beta X\setminus X)=\{0\}$, here $f^\beta:\beta X\to \mathbb{R}$ is the Stone-extension of $f\in C^*(X)$. So $\beta X\setminus X \subseteq Z_{\beta X}(f^\beta)$, the zero set of $f^\beta$ in the space $\beta X$. Choose $\epsilon>0$. We claim that $\widetilde{B}(f,\epsilon)\cap C_K(X)\neq\emptyset$ and we are done.\\
		Proof of the claim: Define a function $h:X\to\mathbb{R}$ as follows 
		\[h(x)=\begin{cases}
			f(x)+\frac{\epsilon}{2} &\ \text{if} \ f(x)\leq -\frac{\epsilon}{2}\\
			0 &\ \text{if} \ -\frac{\epsilon}{2}\leq f(x)\leq \frac{\epsilon}{2}\\
			f(x)-\frac{\epsilon}{2} &\ \text{if} \ f(x)\geq\frac{\epsilon}{2}\\
		\end{cases}\]  Then $h\in C^*(X)$ and $|h(x)-f(x)|<\epsilon$ for each $x\in X$, i.e., $h\in\widetilde{B}(f,\epsilon)$. To complete this theorem, it remains only to check that $h\in C_K(X)$. Indeed let $g=(|f|\wedge\frac{\epsilon}{2}) -\frac{\epsilon}{2}$. Then $Z(f)\subseteq X\setminus Z(g)$ and $X\setminus Z(g)\subseteq Z(h)$ and hence $g.h=0$. Since the map \begin{alignat*}{2} C^*(X)&\to C(\beta X)\\
			k&\mapsto k^p
		\end{alignat*} is a lattice isomorphism, from the definition of $g$, we can at once write: $g^\beta=(|f|^\beta\wedge\frac{\epsilon}{2})-\frac{\epsilon}{2}$ and $g^\beta.h^\beta=0$. Consequently then, $\beta X\setminus Z_{\beta X}(g^\beta)\subseteq Z(h^\beta)$ and also, $Z_{\beta X}(g^\beta)\subseteq\beta X\setminus Z_{\beta X}(f^\beta)$. This shows that $Z(h^\beta)$ is a neighborhood of $\beta X\setminus X$. It follows from $7E$~\cite{Gillman} that $h\in C_K(X)$.
	\end{proof}
	\begin{remark}\label{Rem4.9}
		It is a standard result in the theory of rings of continuous functions that the complete list of free maximal ideals in $C^*(X)$ is given by $\{M^{*p}:p\in \beta X\setminus X\}$, where $M^{*p}=\{h\in C^*(X):h^\beta(p)=0\}$ [Theorem $7.2$,~\cite{Gillman}]. It is also well-known that [vide $7F1$,~\cite{Gillman}], $\bigcap\limits_{p\in \beta X\setminus X}M^{*p}=C_\infty(X)$. Hence we can ultimately write $cl_U {C_K(X)}=C_\infty(X)$.
	\end{remark}
	\begin{remark}
		We can show that for a well chosen collection of naturally existing spaces, $C_K(X)$ is not dense in $C_\infty(X)$ in the $m$-topology on $C(X)$. Indeed let $X$ be a locally compact, $\sigma$-compact non compact space (say $X=\mathbb{R}^n, n\in\mathbb{N}$). Since every $\sigma$-compact space is realcompact, it follows from Theorem $8.19$ in~\cite{Gillman} that $C_K(X)=\bigcap\limits_{p\in \beta X\setminus X}M^p$. Incidentally it is proved in Proposition $5.6$ in~\cite{Azarpanah} that $cl_mC_K(X)(\equiv\text{ the closure of }C_K(X)\text{ in the space }C_m(X))=\bigcap\limits_{p\in \beta X\setminus X}M^p$. Thus $C_K(X)$ is closed in $C_m(X)$. On the other hand it follows from $7F3$~\cite{Gillman} that with the above mentioned condition on $X$, the intersection of all free maximal ideals in $C(X)\subsetneq$ the intersection of all free maximal ideals in $C^*(X)$. Therefore $C_K(X)\subsetneqq C_\infty(X)$ and hence $C_K(X)$ is not closed in $C_\infty(X)$ in the space $C(X)$ in the $m$-topology.
	\end{remark}
	\begin{theorem}\label{Th4.11}
		$cl_U{C_\psi(X)}\ (\equiv \text{ the closure of }C_\psi(X)\text{ in }C_U(X)) =\bigcap\limits_{p\in\beta X\setminus X}\widetilde{M}^p=\{f\in C(X): f^*(\beta X\setminus\upsilon X)=\{0\}\}$.
	\end{theorem}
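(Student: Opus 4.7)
The plan is to follow the exact blueprint of Theorem~\ref{Th4.8}, substituting $\beta X\setminus \upsilon X$ for $\beta X\setminus X$ and $C_\psi(X)$ for $C_K(X)$ throughout. The two external facts we need are: (i) the known identity $C_\psi(X)=\bigcap_{p\in\beta X\setminus \upsilon X}O^p$, which plays the role of the formula $C_K(X)=\bigcap_{p\in\beta X\setminus X}O^p$ used before, and (ii) the standard characterization that a closed subset $S\subseteq X$ is pseudocompact if and only if $cl_{\beta X}S\subseteq \upsilon X$. The Plank-type fact that $cl_U M^p=\widetilde{M}^p$ for every $p\in\beta X$, which was the technical engine of Theorem~\ref{Th4.8}, remains available unchanged.

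For the inclusion $cl_U C_\psi(X)\subseteq \bigcap_{p\in\beta X\setminus\upsilon X}\widetilde{M}^p$, I would chain (i) with the inclusions $O^p\subseteq M^p\subseteq cl_U M^p=\widetilde{M}^p$ to get
\[
C_\psi(X)\ =\ \bigcap_{p\in\beta X\setminus\upsilon X}O^p\ \subseteq\ \bigcap_{p\in\beta X\setminus\upsilon X}\widetilde{M}^p,
\]
and then observe that the right-hand side is an intersection of $C_U$-closed sets, hence closed, so taking $C_U$-closure of the left side preserves containment. The identification of $\bigcap_{p\in\beta X\setminus\upsilon X}\widetilde{M}^p$ with $\{f\in C(X):f^*(\beta X\setminus\upsilon X)=\{0\}\}$ is then immediate from the definition of $\widetilde{M}^p$.

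For the reverse inclusion, suppose $f\in C(X)$ satisfies $f^*(\beta X\setminus\upsilon X)=\{0\}$. First I would observe that $f\in C^*(X)$: by the very definition of $\upsilon X$, $f^*$ is finite on $\upsilon X$, so the hypothesis forces $f^*$ to be finite on all of $\beta X$; continuity into $\mathbb{R}\cup\{\infty\}$ together with compactness of $\beta X$ then gives boundedness, and $f^*$ coincides with the Stone extension $f^\beta$. Given $\epsilon>0$, I would define $h\in C^*(X)$ by the exact truncation formula used in Theorem~\ref{Th4.8} (that is, shift toward $0$ by $\epsilon/2$ outside the band $|f|\leq\epsilon/2$, and set $h=0$ inside that band), so that $\|h-f\|<\epsilon$ automatically.

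The last and crucial step is to verify $h\in C_\psi(X)$. Using the same auxiliary $g=(|f|\wedge \epsilon/2)-\epsilon/2$ as in Theorem~\ref{Th4.8}, the identity $g\cdot h=0$ lifts via the lattice isomorphism $C^*(X)\to C(\beta X)$ to $g^\beta\cdot h^\beta=0$, which gives $\beta X\setminus Z_{\beta X}(g^\beta)\subseteq Z_{\beta X}(h^\beta)$. Combining this with $Z_{\beta X}(g^\beta)\subseteq\{p:|f^\beta(p)|\geq \epsilon/2\}$ and the hypothesis $f^\beta\equiv 0$ on $\beta X\setminus\upsilon X$, I conclude that $Z_{\beta X}(h^\beta)$ contains an open neighborhood of $\beta X\setminus\upsilon X$. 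Equivalently, $cl_{\beta X}\{x\in X: h(x)\neq 0\}\subseteq \upsilon X$, which by fact (ii) means $\mathrm{supp}(h)$ is pseudocompact, i.e., $h\in C_\psi(X)$. Thus $\widetilde{B}(f,\epsilon)\cap C_\psi(X)\neq\emptyset$ for every $\epsilon>0$, giving $f\in cl_U C_\psi(X)$. The main obstacle is this final verification; the subtlety is that pseudocompactness of the support is not visible directly from $|f|\geq \epsilon/2$, and is only recovered by passing to $\beta X$ and using $f^\beta\equiv 0$ on $\beta X\setminus\upsilon X$ to produce the required $\beta X$-neighborhood.
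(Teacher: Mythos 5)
Your proposal is correct and follows essentially the same blueprint as the paper's own proof: the forward inclusion via $O^p\subseteq M^p\subseteq cl_U M^p=\widetilde{M}^p$ together with the Johnson--Mandelker formula, and the reverse inclusion via the truncation $h$ and the auxiliary function $g=(|f|\wedge\frac{\epsilon}{2})-\frac{\epsilon}{2}$. The one point to tighten is your ``fact (ii)'': for an arbitrary closed subset $S$ of $X$, the condition $cl_{\beta X}S\subseteq\upsilon X$ is equivalent to $S$ being \emph{bounded} (relatively pseudocompact) in $X$, not to $S$ being pseudocompact; an infinite closed discrete subset of a pseudocompact space is bounded but not pseudocompact, and the implication you actually invoke ($cl_{\beta X}S\subseteq\upsilon X\Rightarrow S$ pseudocompact) is exactly the direction that fails in general. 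Your argument is rescued because the set in question is $\mathrm{supp}(h)$, a support, and by Mandelker's theorem [Theorem $2.1$,~\cite{Mandelker}] a support is pseudocompact if and only if it is bounded; with that citation added, your final step is sound. The paper sidesteps this issue by instead verifying directly that $h\in M^p$ for every $p\in\beta X\setminus\upsilon X$ (each neighborhood of such a $p$ meets $Z(h)$, using the density of $X$ in $\beta X$ and the inclusion $\beta X\setminus Z_{\beta X}(g^\beta)\subseteq Z_{\beta X}(h^\beta)$) and then quoting $C_\psi(X)=\bigcap_{p\in\beta X\setminus\upsilon X}M^p$; the two routes are interchangeable, and your observation that $f^*$ real-valued on the compact space $\beta X$ forces $f\in C^*(X)$ is in fact cleaner than the paper's $C$-embedded-copy-of-$\mathbb{N}$ argument.
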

	\begin{proof}
		We shall follow closely the technique adopted to prove Theorem~\ref{Th4.8}. First recall the well-known fact: $C_\psi(X)=\bigcap\limits_{p\in\beta X\setminus \upsilon X}M^p$, Theorem $3.1$~\cite{Johnson}. It follows on adopting the chain of arguments in the first part of the proof of Theorem~\ref{Th4.8} that $cl_U{C_\psi(X)}\subseteq \bigcap\limits_{p\in\beta X\setminus X}\widetilde{M}^p=\{f\in C(X):f^*(\beta X\setminus\upsilon X)=\{0\}\}$. To prove the reverse inclusion relation, choose $f\in C(X)$ such that $f^*(\beta X\setminus\upsilon X)=\{0\}$, then it is not at all hard to prove that $f$ is bounded on $X$ and therefore we can rewrite as in the proof of Theorem~\ref{Th4.8} that $cl_UC_\psi(X)\subseteq\{f\in C^*(X):f^\beta (\beta X\setminus\upsilon X)=\{0\}\}$ and hence $\beta X\setminus\upsilon X\subseteq Z_{\beta X}(f^\beta)$. Next choosing $\epsilon>0$ and proceeding exactly as in the proof of Theorem~\ref{Th4.8}, thereby defining the bounded continuous function $h:X\to\mathbb{R}$ verbatim. We can easily check that $h\in\widetilde{B}(f,\epsilon)$. In the next stage we set as in the proof of Theorem~\ref{Th4.8}, $g=(|f|\wedge \frac{\epsilon}{2})-\frac{\epsilon}{2}$ and ultimately reach the inequality: \[ \beta X \setminus \upsilon X\subseteq Z_{\beta X}(f^\beta)\subseteq \beta X\setminus Z_{\beta X}(g^\beta) \subseteq Z_{\beta X}(h^\beta)\ \ \ \ldots (1)\] To complete this theorem, it remains to check that $h\in C_\psi(X)$. Since $C_\psi(X)=\bigcap\limits_{p\in \beta X\setminus\upsilon X}M^p$, it is therefore sufficient to show that (in view of Gelfand-Kolmogoroff Theorem~\cite{Gillman}), for each point $p\in\beta X\setminus \upsilon X,\ p\in cl_{\beta X}Z(h)$. For that purpose let $U$ be an open neighborhood of $p$ in $\beta X$. Then $V=\beta X\setminus Z_{\beta X }(g^\beta)\cap U$ is an open neighborhood of $p$ in $\beta X$ (we exploit the inequality $(1)$). Therefore $V\cap X\neq\emptyset$. But from $(1)$ we get that $V\cap X\subseteq Z(h)$. hence $Z(h)\cap U\neq\emptyset$. Thus each open neighborhood of $p$ in $\beta X$ cuts $Z(h)$ and therefore $p\in cl_{\beta X}Z(h)$.
	\end{proof}
	For notational convenience let us write for $f\in C(X)$ and $n\in\mathbb{N}$, $A_n(f)=\{x\in X:|f(x)|\geq\frac{1}{n}\}$. Since a support, i.e., a set of the form $cl_X(X\setminus Z(k)),\ k\in C(X)$ is pseudocompact if and only if it is bounded meaning that each $h\in C(X)$ is bounded on $cl_X(X\setminus Z(k))$ [Theorem $2.1$,~\cite{Mandelker}]. We rewrite: $C_\infty^\psi(X)=\{f\in C(X): A_n(f)\text{ is bounded for each }n\in \mathbb{N}\}$ [see~\cite{Acharyya} in this connection]. The following  result relates this ring with $C_\psi(X)$.
	\begin{theorem}\label{Th4.12}
		$C_\psi^\infty(X)=cl_UC_\psi(X)$.
	\end{theorem}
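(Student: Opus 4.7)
The plan is to combine Theorem~\ref{Th4.11} with the standard characterization of bounded (relatively pseudocompact) subsets in terms of $\beta X$ and $\upsilon X$: a subset $Y\subseteq X$ is bounded if and only if $cl_{\beta X}Y\subseteq\upsilon X$. Indeed, the authors have already recorded (in the paragraph preceding the theorem) that a support is bounded iff it is pseudocompact, and every $h\in C(X)$ extends continuously to $\upsilon X\to\mathbb{R}$ but only as $h^*:\beta X\to\mathbb{R}\cup\{\infty\}$. With this in hand, by Theorem~\ref{Th4.11} it suffices to prove the set equality
\[
\{f\in C(X): A_n(f)\text{ is bounded for every }n\in\mathbb{N}\}=\{f\in C(X):f^*(\beta X\setminus\upsilon X)=\{0\}\}.
\]

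For the inclusion $C_\infty^\psi(X)\subseteq cl_U C_\psi(X)$, I would argue by contradiction. Suppose $f\in C_\infty^\psi(X)$ but $f^*(p)\neq 0$ for some $p\in\beta X\setminus\upsilon X$. Pick $n\in\mathbb{N}$ with $|f^*(p)|>1/n$ (allowing the value $\infty$, interpreted in the usual topology on $\mathbb{R}\cup\{\infty\}$). By continuity of $f^*$ there is an open neighborhood $V$ of $p$ in $\beta X$ with $|f^*(y)|>1/n$ for all $y\in V$. Because $X$ is dense in $\beta X$, $p\in cl_{\beta X}(V\cap X)$, and since $f^*|_X=f$, we have $V\cap X\subseteq A_n(f)$. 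Hence $p\in cl_{\beta X}A_n(f)$. But $A_n(f)$ is bounded by hypothesis, so $cl_{\beta X}A_n(f)\subseteq\upsilon X$, contradicting $p\notin\upsilon X$.

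For the reverse inclusion, let $f$ satisfy $f^*(\beta X\setminus\upsilon X)=\{0\}$ and fix $n\in\mathbb{N}$. To prove $A_n(f)$ is bounded, I would show $cl_{\beta X}A_n(f)\subseteq\upsilon X$. Take any $p\in cl_{\beta X}A_n(f)$ and a net $(x_\alpha)$ in $A_n(f)$ with $x_\alpha\to p$ in $\beta X$. Continuity of $f^*$ yields $f^*(p)=\lim_\alpha f(x_\alpha)$ in $\mathbb{R}\cup\{\infty\}$, so $|f^*(p)|\geq 1/n>0$. Since $f^*$ vanishes on $\beta X\setminus\upsilon X$, this forces $p\in\upsilon X$, as required.

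The main obstacle will be the continuity argument at points $p$ where $f^*(p)=\infty$: one must be careful that the strict inequality $|f^*(y)|>1/n$ really does persist on a whole neighborhood of $p$ in $\beta X$, which relies on viewing $f^*$ as continuous into the one-point compactification $\mathbb{R}\cup\{\infty\}$. The other tacit ingredient is the equivalence \textquotedblleft{}bounded $\Leftrightarrow cl_{\beta X}Y\subseteq\upsilon X$\textquotedblright{}, which is standard but is the real content making both directions work.
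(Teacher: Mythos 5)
Your proposal is correct and follows essentially the same route as the paper: both reduce the theorem via Theorem~\ref{Th4.11} to the set equality $C_\infty^\psi(X)=\{f\in C(X):f^*(\beta X\setminus\upsilon X)=\{0\}\}$ and prove the two inclusions by relating the sets $A_n(f)$ to $\upsilon X$ inside $\beta X$. The only difference is packaging: you invoke the standard equivalence \emph{$Y$ is bounded iff $cl_{\beta X}Y\subseteq\upsilon X$} as a black box, whereas the paper re-derives both halves of that equivalence inline (using that $g^*$ is real-valued on the compact set $A_n(f^\beta)\subseteq\upsilon X$ for one direction, and choosing $g$ with $g^*(p)=\infty$ for $p\notin\upsilon X$ in the other).
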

	\begin{proof}
		In view of Theorem~\ref{Th4.11}, it amounts to showing that $C_\psi^\infty (X)=\{f\in C(X):f^*(\beta X\setminus\upsilon X)=0\}$. For that we make the elementary but important observation that $C_\psi^\infty(X)\subseteq C^*(X)$. First assume that $f\in C(X)$ and $f^*(\beta X\setminus\upsilon X)=\{0\}$, i.e., $f^\beta(\beta X\setminus\upsilon X)=\{0\}$. Choose $n\in \mathbb{N}$ arbitrarily, we shall show that $A_n(f)$ is bounded. For that purpose select $g\in C(X)$ at random. Now by abusing notation we write $ A_n(f^\beta)=\{p\in \beta X:|f^\beta(p)|\geq\frac{1}{n}\}$. Then it is clear that $A_n(f^\beta)\subseteq \upsilon X$ and surely $A_n(f^\beta)$ is compact. It follows that for the function $g^*:\beta X\to\mathbb{R}\cup\{\infty\},\ g^*(A_n(f^\beta))$ is compact subset of $\mathbb{R}$. In particular we can say that $g$ is bounded on $A_n(f)$, which we precisely need. Thus it is proved that $\{f\in C(X):f^*(\beta X\setminus \upsilon X)=\{0\}\}\subseteq C_\psi^\infty(X)$. To prove the reverse containment, let $f\in C^*(X)$ and $f^*(\beta X\setminus\upsilon X)\neq\{0\}$. Without loss of generality we can take $f\geq 0$ on $X$, this means that there exists $p\in\beta X\setminus\upsilon X$ and $n\in\mathbb{N}$, for which $f^\beta(p)>\frac{1}{n}$. Hence there exists an open neighborhood $U$ of $p$ in $\beta X$ for which $f^\beta>\frac{1}{n}$ on th entire $U$. It follows that $p\in cl_\beta A_n(f)$. On the other hand, since $p\notin\upsilon X$, there exists $g\in C(X)$ such that $g^*(p)=\infty$. These two facts together imply that $g$ is unbounded on $A_n(f)$. Hence $A_n(f)$ is not pseudocompact and thus $f\notin C_\infty^\psi(X)$. The theorem is completely proved. 
	\end{proof}
	\begin{theorem}\label{Th4.13}
		Let $\mathscr{P}$ be an ideal of closed set in $X$. Then 
		\begin{enumerate}
			\item $C_\infty^\mathscr{P}(X)$ is a closed subset of $C_U(X)$.
			\item $cl_UC_\mathscr{P}(X)=C_\infty^\mathscr{P}(X)$.\label{Th4.13(2)}
		\end{enumerate}
	\end{theorem}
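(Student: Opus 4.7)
The plan is to prove (1) directly by showing that the complement of $C_\infty^\mathscr{P}(X)$ is open in $C_U(X)$, and then derive (2) from (1) together with a one-sided truncation argument analogous to the one used in the proofs of Theorem~\ref{Th4.8} and Theorem~\ref{Th4.11}.

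For part (1), I would take $f \in C(X) \setminus C_\infty^\mathscr{P}(X)$ and pick $n \in \mathbb{N}$ with $A_n(f) \notin \mathscr{P}$. The claim is that $\widetilde{B}(f, \tfrac{1}{2n})$ is an open $U$-neighborhood of $f$ disjoint from $C_\infty^\mathscr{P}(X)$. Indeed, if $g \in \widetilde{B}(f, \tfrac{1}{2n})$, then $|g-f| < \tfrac{1}{2n}$ pointwise, so $A_n(f) \subseteq A_{2n}(g)$. Since $A_n(f)$ is closed in $X$, and $\mathscr{P}$ is closed under taking closed subsets of its members, $A_{2n}(g) \in \mathscr{P}$ would force $A_n(f) \in \mathscr{P}$, contradicting the choice of $n$. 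Hence $g \notin C_\infty^\mathscr{P}(X)$, proving that $C_\infty^\mathscr{P}(X)$ is closed.

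For part (2), the inclusion $cl_U C_\mathscr{P}(X) \subseteq C_\infty^\mathscr{P}(X)$ follows from (1) together with the routine observation that $C_\mathscr{P}(X) \subseteq C_\infty^\mathscr{P}(X)$: if $f \in C_\mathscr{P}(X)$, then $A_n(f)$ is a closed subset of the support of $f$, which lies in $\mathscr{P}$, hence $A_n(f) \in \mathscr{P}$. For the reverse inclusion, given $f \in C_\infty^\mathscr{P}(X)$ and $\epsilon > 0$, I would mimic the truncation device used in Theorem~\ref{Th4.8}, setting
\[
h(x)=\begin{cases} f(x)+\tfrac{\epsilon}{2} & \text{if } f(x)\leq -\tfrac{\epsilon}{2},\\ 0 & \text{if } |f(x)|\leq \tfrac{\epsilon}{2},\\ f(x)-\tfrac{\epsilon}{2} & \text{if } f(x)\geq \tfrac{\epsilon}{2}. \end{cases}
\]
Then $h \in C(X)$, $|h-f|<\epsilon$, and the cozero set $X\setminus Z(h)=\{x:|f(x)|>\tfrac{\epsilon}{2}\}$ is contained in $A_n(f)$ for any integer $n\geq 2/\epsilon$. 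Since $A_n(f)$ is closed and belongs to $\mathscr{P}$, the closure $\mathrm{supp}(h)\subseteq A_n(f)$ also lies in $\mathscr{P}$, so $h \in C_\mathscr{P}(X) \cap \widetilde{B}(f,\epsilon)$, and $f \in cl_U C_\mathscr{P}(X)$.

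I do not anticipate a serious obstacle: the argument is essentially bookkeeping about how the level sets $A_n$ transform under uniform perturbations and under the standard truncation. The only point requiring mild care is invoking the ideal property of $\mathscr{P}$ in both directions (closure under finite unions is not used, only closure under passage to closed subsets), and making sure the closed sets produced, namely $A_n(f)$, $A_{2n}(g)$, and $\mathrm{supp}(h)$, all sit inside $X$ as closed sets so that the defining hereditary property of $\mathscr{P}$ applies.
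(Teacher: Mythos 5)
Your proposal is correct and follows essentially the same route as the paper: the same $\widetilde{B}(f,\frac{1}{2n})$ neighborhood with the inclusion $A_n(f)\subseteq A_{2n}(g)$ for part (1), and the same $\frac{\epsilon}{2}$-truncation for the density half of part (2). The only difference is that you spell out the (correct but routine) inclusion $C_\mathscr{P}(X)\subseteq C_\infty^\mathscr{P}(X)$ and the choice $n\geq 2/\epsilon$, which the paper leaves implicit.
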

	\begin{proof}
		\hspace*{3cm}
		\begin{enumerate}
			\item Let us rewrite: $C_\infty^\mathscr{P}(X)=\{f\in C(X):\text{ for each } n\in\mathbb{N}, A_n(f)\in\mathscr{P}\}$. Suppose $f\in C(X)$ is such that $f\notin C_\infty^\mathscr{P}(X)$. Thus there exists $n\in\mathbb{N}$ such that $A_n(f)\notin\mathscr{P}$. We claim that $\widetilde{B}(f,\frac{1}{2n})\cap C_\infty^\mathscr{P}(X)=\emptyset$ and we are done. If possible let there exists $g\in\widetilde{B}(f,\frac{1}{2n}) \cap C_\infty^\mathscr{P}(X)$. Then $|g-f|<\frac{1}{2n}$ and $A_k(g)\in \mathscr{P}$ for each $k\in\mathbb{N}$. The first inequality implies that $|f|<|g|+\frac{1}{2n}$, which further implies that $A_n(f)\subseteq A_{2n}(g)$. This combined with $A_{2n}(g)\in \mathscr{P}$ yields, in view of the fact that $\mathscr{P}$ is an ideal of closed sets in $X$ that $A_n(f)\in \mathscr{P}$, a contradiction.
			\item Let $f\in C_\infty^\mathscr{P}(X)$ and $\epsilon>0$ in $\mathbb{R}$. Define as in the proof of Theorem~\ref{Th4.8}, a function $g:X\to\mathbb{R}$ as follows: \[ g(x)=\begin{cases}
				f(x)+\frac{\epsilon}{2} &\ if \ f(x)\leq -\frac{\epsilon}{2}\\
				0 &\ if \ -\frac{\epsilon}{2}\leq f(x)\leq \frac{\epsilon}{2}\\
				f(x)-\frac{\epsilon}{2} & \ if \ f(x)\geq \frac{\epsilon}{2}\\
			\end{cases} \]
			Then $g\in\widetilde{B}(f,\epsilon)$, we assert that $g\in C_\mathscr{P}(X)$ and therefore $\widetilde{B}(f,\epsilon)\cap C_\mathscr{P}(X)\neq\emptyset$ and we are done. Proof of the assertion: $X\setminus Z(g)\subseteq \{x\in X: |f(x)|\geq\frac{\epsilon}{2}\}$, this implies that: $cl_X(X\setminus Z(g)) \subseteq\{x\in X:|f(x)|\geq\frac{\epsilon}{2}\}$. Since $f\in C_\infty^\mathscr{P}(X)$, if follows that $\{x\in X: |f(x)|\geq\frac{\epsilon}{2}\}\in\mathscr{P}$ and hence $cl_X(X\setminus Z(g))\in\mathscr{P}$. Thus $g\in C_\mathscr{P}(X)$. 
		\end{enumerate}
	\end{proof}
	Set $I_\mathscr{P}=\{f\in C(X):f.g\in C^\mathscr{P}_\infty(X)\text{ for each }g\in C(X)\}$.
	\begin{theorem}\label{Th4.14}
		The following results hold:
		\begin{enumerate}
			\item $I_\mathscr{P}$ is an ideal in $C(X)$ with $C_\mathscr{P}(X)\subset I_\mathscr{P}\subset C^\mathscr{P}_\infty(X)$.
			\item $I_\mathscr{P}$ is closed in $C_m(X)$.
			\item $cl_mC_\mathscr{P}(X)=I_\mathscr{P}$.\label{Th4.14(3)}
			\item $I_\mathscr{P}=\bigcap\limits_{p\in F_\mathscr{P}}M^p$, where $F_\mathscr{P}=\{p\in\beta X:C_\mathscr{P}(X)\subset M^p\}$.
		\end{enumerate}
	\end{theorem}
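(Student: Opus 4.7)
The four parts cascade, with (2) depending on (1), (3) on (2), and (4) on (3). For (1), the key input is that $C_\infty^\mathscr{P}(X)$ is closed under addition, via the inclusion $A_n(f_1+f_2)\subseteq A_{2n}(f_1)\cup A_{2n}(f_2)$ together with the ideal-of-closed-sets property of $\mathscr{P}$; closure of $I_\mathscr{P}$ under multiplication by $C(X)$ is built into the definition, so $I_\mathscr{P}$ is an ideal. Setting $g=1$ gives $I_\mathscr{P}\subseteq C_\infty^\mathscr{P}(X)$, while for $C_\mathscr{P}(X)\subseteq I_\mathscr{P}$ the chain $A_n(fg)\subseteq\mathrm{supp}(fg)\subseteq\mathrm{supp}(f)\in\mathscr{P}$ does the job.

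For (2), suppose $f\notin I_\mathscr{P}$ and pick $g\in C(X)$, $n\in\mathbb{N}$ with $A_n(fg)\notin\mathscr{P}$. Take the positive unit $u=\frac{1}{2n(1+|g|)}$, so that any $h$ in the $m$-neighborhood $\{h':|h'-f|<u\text{ on }X\}$ satisfies $|hg-fg|<u|g|<\frac{1}{2n}$, pushing $|hg|$ above $\frac{1}{2n}$ on $A_n(fg)$. Hence the closed set $A_n(fg)$ sits inside $A_{2n}(hg)$; were $h\in I_\mathscr{P}$, then $A_{2n}(hg)\in\mathscr{P}$ and the ideal-of-closed-sets property would force $A_n(fg)\in\mathscr{P}$, a contradiction. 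So this $m$-neighborhood is disjoint from $I_\mathscr{P}$.

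For (3), the inclusion $cl_mC_\mathscr{P}(X)\subseteq I_\mathscr{P}$ is immediate from (1) and (2). For the reverse, given $f\in I_\mathscr{P}$ and a positive unit $u$, I would use the soft-thresholded function
\[
k(x)=\begin{cases} f(x)-\tfrac{u(x)}{2} & \text{if } f(x)\geq \tfrac{u(x)}{2},\\ 0 & \text{if } |f(x)|\leq \tfrac{u(x)}{2},\\ f(x)+\tfrac{u(x)}{2} & \text{if } f(x)\leq -\tfrac{u(x)}{2},\end{cases}
\]
which lies in $C(X)$ with $|k-f|\leq u/2<u$. Its support sits in the closed set $\{|f|\geq u/2\}=A_1(2f/u)$; because $2/u\in C(X)$ and $f\in I_\mathscr{P}$, the product $2f/u$ lies in $C_\infty^\mathscr{P}(X)$, placing $A_1(2f/u)$ in $\mathscr{P}$. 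Thus $k\in C_\mathscr{P}(X)$ with $|k-f|<u$.

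The nontrivial half of (4) is the main obstacle. Using the standard fact that each maximal ideal $M^p$ is closed in $C_m(X)$, part (3) immediately yields $I_\mathscr{P}\subseteq\bigcap_{p\in F_\mathscr{P}}M^p$. Conversely, suppose $f\notin I_\mathscr{P}$ and pick $g,n$ with $A:=A_n(fg)\notin\mathscr{P}$. For each $h\in C_\mathscr{P}(X)$, $A\cap Z(h)\neq\emptyset$: were the intersection empty, then $A\subseteq\mathrm{supp}(h)\in\mathscr{P}$ would put $A$ in $\mathscr{P}$. Since $Z(h_1)\cap\cdots\cap Z(h_k)=Z(h_1^2+\cdots+h_k^2)$ with $\sum h_i^2\in C_\mathscr{P}(X)$, the family $\{cl_{\beta X}A\}\cup\{cl_{\beta X}Z(h):h\in C_\mathscr{P}(X)\}$ has the finite intersection property in the compact space $\beta X$, producing some $p\in cl_{\beta X}(A)\cap F_\mathscr{P}$ (using $F_\mathscr{P}=\bigcap_{h\in C_\mathscr{P}(X)}cl_{\beta X}Z(h)$ by Gelfand--Kolmogoroff). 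Finally, the bounded function $\tau:=|fg|\wedge 1\in C^*(X)$ satisfies $\tau\geq 1/n$ on $A$, so $\tau^\beta(p)\geq 1/n>0$; since $Z(f)\subseteq Z(\tau)$ forces $cl_{\beta X}Z(f)\subseteq Z_{\beta X}(\tau^\beta)$, we get $p\notin cl_{\beta X}Z(f)$, i.e., $f\notin M^p$, contradicting $p\in F_\mathscr{P}$.
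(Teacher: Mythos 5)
Your proof is correct. Parts (1)--(3) follow essentially the same route as the paper: the inclusion $A_n(f_1h+f_2h)\subseteq A_{2n}(f_1h)\cup A_{2n}(f_2h)$ for the ideal property, the unit $u=\frac{1}{2n(1+|g|)}$ for closedness, and the soft-thresholded function with support inside $\{|f|\geq u/2\}=A_2(f/u)\in\mathscr{P}$ for denseness of $C_\mathscr{P}(X)$ in $I_\mathscr{P}$. The only genuine divergence is in part (4): the paper disposes of it in one line by citing the general fact (7Q2 in Gillman--Jerison) that the $m$-closure of any ideal of $C(X)$ is the intersection of the maximal ideals containing it, and then invoking part (3). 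You instead reprove the relevant instance of that fact from scratch: starting from $f\notin I_\mathscr{P}$ you produce $A=A_n(fg)\notin\mathscr{P}$, show $A$ meets every $Z(h)$ with $h\in C_\mathscr{P}(X)$, and use the finite intersection property in $\beta X$ (via $Z(h_1)\cap\cdots\cap Z(h_k)=Z(\sum h_i^2)$ and $F_\mathscr{P}=\bigcap_{h\in C_\mathscr{P}(X)}cl_{\beta X}Z(h)$) to find $p\in cl_{\beta X}A\cap F_\mathscr{P}$ with $f\notin M^p$. This is self-contained and makes visible exactly where compactness of $\beta X$ and the ideal-of-closed-sets hypothesis enter, at the cost of length; the paper's citation is shorter but leans on an external result. (Your closing phrase ``contradicting $p\in F_\mathscr{P}$'' should read ``showing $f\notin\bigcap_{p\in F_\mathscr{P}}M^p$'', but the logic is sound.)
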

	\begin{proof}
		\hspace*{3cm}
		\begin{enumerate}
			\item Let $f,g\in I_\mathscr{P}$ and $h\in C(X)$. Then $f.h, g.h\in C^\mathscr{P}_\infty(X)\implies (f+g)h\in C^\mathscr{P}_\infty(X)$, because $A_n(f.h+g.h)\subset A_{2n}(f.h)\cup A_{2n}(g.h)$ for each $n\in\mathbb{N}$. Also let $f\in I_\mathscr{P}$ and $g\in C(X)$. Consider any $h\in C(X)$. Then $g.h\in C(X)\implies f.g.h\in C^\mathscr{P}_\infty(X)\implies f.g\in I_\mathscr{P}$. Thus $I_\mathscr{P}$ is an ideal in $C(X)$. Clearly $C_\mathscr{P}(X)\subset I_\mathscr{P}\subset C^\mathscr{P}_\infty(X)$.
			\item Let $f\in C(X)$ such that $f\notin I_\mathscr{P}$. Then there exists $g\in C(X)$ such that $f.g\notin C^\mathscr{P}_\infty(X)$. Therefore there exists $p\in\mathbb{N}$ such that $A_p(f.g)\notin\mathscr{P}$. Let $u=\frac{1}{2p(1+|g|)}$. Then $u$ is a positive unit in $C(X)$. If possible let $h\in \widetilde{B}(f,u)\cap I_\mathscr{P}$. Then $|f-h|<u$ and $h\in I_\mathscr{P}$. Then $h.g\in C^\mathscr{P}_\infty(X)\implies A_n(h.g)\in\mathscr{P}$ for all $n\in\mathbb{N}$. Now $|f-h|<u\implies |f.g-h.g|<u|g|<\frac{1}{2p}\implies |f.g|<|h.g|+\frac{1}{2p}\implies A_p(f.g)\subset A_{2p}(h.g)\implies A_p(f.g)\in\mathscr{P}$ -- a contradiction. Therefore $\widetilde{B}(f,u)\cap I_\mathscr{P}=\emptyset$ and hence $I$ is closed in $C_m(X)$.
			\item Since $I_\mathscr{P}$ is closed in $C_m(X)$, it follows that $cl_mC_\mathscr{P}(X)\subseteqq I_\mathscr{P}$. Let $f\in I_\mathscr{P}$ and $u$ be any positive unit in $C(X)$. Define a function $g:X\to\mathbb{R}$ as follows: \[ g(x)=\begin{cases}
				f(x)+\frac{1}{2}u(x) &\ if \ f(x)\leq -\frac{1}{2}u(x)\\
				0 &\ if \ -\frac{1}{2}u(x)\leq f(x)\leq \frac{1}{2}u(x)\\
				f(x)-\frac{1}{2}u(x) & \ if \ f(x)\geq \frac{1}{2}u(x)\\
			\end{cases} \]
			Then $g\in\widetilde{B}(f,u)$ and $cl_X(X\setminus Z(g))\subset \{x\in X: |f(x)|\geq\frac{1}{2}u(x)\}$. Now $\frac{1}{u}\in C(X)$ and $f\in I_\mathscr{P}\implies\frac{f}{u}\in C^\mathscr{P}_\infty(X)\implies A_n(\frac{f}{u})\in\mathscr{P}$ for all $n\in\mathbb{N}$. It is clear that $A_2(\frac{f}{u})=\{x\in X: |f(x)|\geq\frac{1}{2}u(x)\}$ and so $cl_X(X\setminus Z(g))\in\mathscr{P}$, i.e., $g\in C_\mathscr{P}(X)$. Thus $\widetilde{B}(f,u)\cap C_\mathscr{P}(X)\neq\emptyset$, i.e., $f\in cl_mC_\mathscr{P}(X)$. So $I_\mathscr{P}\subseteqq cl_mC_\mathscr{P}(X)$.
			\item We know that the closure of an ideal $J$ of $C(X)$ in the $m$-topology is the intersection of all maximal ideal containing $J$ [$7Q2$~\cite{Gillman}]. Therefore $I_\mathscr{P}=cl_mC_\mathscr{P}(X)=\bigcap\limits_{p\in F_\mathscr{P}}M^p$.
		\end{enumerate}
	\end{proof}
	\begin{corollary}
		The closure of $C_K(X)$ in the $m$-topology is the ideal $\{f\in C(X):fg\in C_\infty(X)\text{ for each }g\in C(X)\}$. When $\mathscr{P}$ is the ideal of all compact sets in $X$, $F_\mathscr{P}$ will be $\beta X-X$ and hence $I_\mathscr{P}=\bigcap\limits_{p\in \beta X-X}M^p$ i.e., $cl_mC_K(X)=\bigcap\limits_{p\in \beta X-X}M^p$ [This last result is achieved independently in~\cite{Azarpanah2} [Proposition $5.6$]].
	\end{corollary}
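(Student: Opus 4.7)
The plan is to specialize Theorem~\ref{Th4.14} to $\mathscr{P}$ being the ideal of all compact subsets of $X$ and read off both assertions. First I would verify that the ideal-of-closed-sets conditions hold for the family of compact sets: finite unions of compacts are compact, and closed subsets of compact sets are compact. Next I would unwind the definitions: $C_\mathscr{P}(X)$ becomes $C_K(X)$ by the very definition of support, while $C_\infty^\mathscr{P}(X)$ becomes $C_\infty(X)$ via the standard characterization that $f\in C(X)$ vanishes at infinity if and only if $A_n(f)=\{x\in X:|f(x)|\ge 1/n\}$ is compact for every $n\in\mathbb{N}$. Once these identifications are in place, Theorem~\ref{Th4.14}(\ref{Th4.14(3)}) gives directly
\[
cl_m C_K(X)=I_\mathscr{P}=\{f\in C(X):f\cdot g\in C_\infty(X)\ \text{for each}\ g\in C(X)\},
\]
which is the first assertion of the corollary.

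For the second assertion I would invoke Theorem~\ref{Th4.14}(4), which writes $I_\mathscr{P}=\bigcap_{p\in F_\mathscr{P}}M^p$ with $F_\mathscr{P}=\{p\in\beta X:C_K(X)\subseteq M^p\}$, and then identify $F_\mathscr{P}$ with $\beta X\setminus X$. The inclusion $\beta X\setminus X\subseteq F_\mathscr{P}$ is immediate from the Gillman--Jerison description $C_K(X)=\bigcap_{p\in\beta X\setminus X}O^p$ ($7E$ in~\cite{Gillman}) combined with $O^p\subseteq M^p$ for each such $p$. For the reverse inclusion, given $p\in X$ one uses local compactness of $X$ at $p$ to produce, via Urysohn's lemma on $\{p\}$ and the complement of a compact neighbourhood, a function $f\in C_K(X)$ with $f(p)=1$, so that $C_K(X)\not\subseteq M^p$ and hence $p\notin F_\mathscr{P}$.

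The main obstacle is this identification $F_\mathscr{P}=\beta X\setminus X$: it is genuinely a hypothesis on $X$ (local compactness), since at any point $p\in X$ at which $X$ fails to be locally compact every $f\in C_K(X)$ must vanish at $p$ (the support of $f$, being compact, would otherwise contain a neighbourhood of $p$), forcing $p\in F_\mathscr{P}$. In the intended setting (which covers the Azarpanah--Soraki result being recovered), $X$ is locally compact and the two descriptions agree, so substitution into Theorem~\ref{Th4.14}(4) yields $cl_m C_K(X)=\bigcap_{p\in\beta X\setminus X}M^p$ as claimed.
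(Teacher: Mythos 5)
Your route is the paper's own: the corollary is stated there without a separate proof, as an immediate specialization of Theorem~\ref{Th4.14} to $\mathscr{P}=$ the ideal of compact subsets of $X$, and your identifications $C_\mathscr{P}(X)=C_K(X)$ and $C_\infty^\mathscr{P}(X)=C_\infty(X)$ together with the appeal to parts (3) and (4) are exactly what is intended. The first assertion of the corollary is therefore handled correctly.

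The substantive issue is your treatment of $F_\mathscr{P}$. You are right to flag that the literal equality $F_\mathscr{P}=\beta X\setminus X$ fails unless $X$ is locally compact: at a point $p\in X$ admitting no compact neighbourhood, every $f\in C_K(X)$ vanishes, so $p\in F_\mathscr{P}$ (in general $F_\mathscr{P}=cl_{\beta X}(\beta X\setminus X)$). But your fix --- assuming $X$ locally compact --- weakens the corollary, which is asserted (and is true) for arbitrary completely regular $X$. What you are missing is that the extra points of $F_\mathscr{P}$ contribute nothing to the intersection: if $f\in\bigcap_{p\in\beta X\setminus X}M^p$, i.e.\ $cl_{\beta X}Z(f)\supseteq\beta X\setminus X$, then $\beta X\setminus cl_{\beta X}Z(f)$ is an open subset of $\beta X$ contained in $X$, so any point where $f$ does not vanish has a compact neighbourhood; hence $f$ automatically vanishes at every point of $X$ at which $X$ fails to be locally compact, and $\bigcap_{p\in F_\mathscr{P}}M^p=\bigcap_{p\in\beta X\setminus X}M^p$. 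More cheaply, this is the paper's own Theorem 2.3 ($M^A=M^{\overline{A}}$) applied to $A=\beta X\setminus X$, since $F_\mathscr{P}=cl_{\beta X}(\beta X\setminus X)$. With that one observation your argument gives the corollary in full generality; as written it only establishes the second assertion under an added hypothesis the statement does not carry.
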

	\begin{corollary}
		From Theorem $3.1$~\cite{Johnson}, $C_\psi(X)=\bigcap\limits_{p\in\beta X\setminus \upsilon X}M^p$ and so $C_\psi(X)$ is closed in the $m$-topology. Again by Theorem~\ref{Th4.14}(\ref{Th4.14(3)}), $cl_m(C_\psi(X))=\{f\in C(X):fg\in C^\psi_\infty(X)\text{ for each }g\in C(X)\}$. Thus $C_\psi(X)$ can also be written as $\{f\in C(X):fg\in C^\psi_\infty(X)\text{ for each }g\in C(X)\}$, this is an alternate formula for $C_\psi(X)$ [Compare with the known formula: $C_\psi(X)=\{f\in C(X):fg\in C^*(X)\text{ for each }g\in C(X)\}$, [Recorded in Section $3$,~\cite{Azarpanah2}]].
	\end{corollary}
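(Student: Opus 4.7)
The plan is to view this Corollary as an immediate consequence of applying Theorem~\ref{Th4.14}(\ref{Th4.14(3)}) with a well-chosen ideal of closed sets. The natural candidate is $\mathscr{P}=\{E\subseteq X:E\text{ is closed and bounded in }X\}$, where bounded is in the Mandelker sense (every $h\in C(X)$ is bounded on $E$). I would begin by verifying that $\mathscr{P}$ really is an ideal of closed sets: a closed subset of a bounded set is trivially bounded, and the union of two bounded closed sets is bounded because if $h\in C(X)$ is bounded on each of $E,F$ then it is bounded on $E\cup F$.

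Next, I would identify $C_\mathscr{P}(X)$ and $C_\infty^\mathscr{P}(X)$ with $C_\psi(X)$ and $C_\infty^\psi(X)$ respectively. For the former, this is the statement that the support of $f$ is pseudocompact iff it is bounded, which is Theorem $2.1$~\cite{Mandelker} quoted just before Theorem~\ref{Th4.12}. For the latter, the same Mandelker result applied to the closed set $A_n(f)$ gives $A_n(f)$ pseudocompact iff bounded, which is exactly the rewriting of $C_\infty^\psi(X)$ used in the discussion preceding Theorem~\ref{Th4.12}.

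With this identification in place, the proof assembles quickly. Johnson's formula $C_\psi(X)=\bigcap_{p\in\beta X\setminus\upsilon X}M^p$ [Theorem $3.1$,~\cite{Johnson}] expresses $C_\psi(X)$ as an intersection of maximal ideals; since every maximal ideal of $C(X)$ is closed in the $m$-topology (cf.~$7Q2$~\cite{Gillman}), so is this intersection, and therefore $cl_m C_\psi(X)=C_\psi(X)$. On the other hand, Theorem~\ref{Th4.14}(\ref{Th4.14(3)}) applied to our $\mathscr{P}$ yields
\[
cl_m C_\psi(X)=cl_m C_\mathscr{P}(X)=I_\mathscr{P}=\{f\in C(X):f\cdot g\in C_\infty^\psi(X)\text{ for every }g\in C(X)\}.
\]
Equating the two expressions for $cl_m C_\psi(X)$ gives the announced alternate formula.

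The main obstacle is not the logical deduction, which is essentially a two-line application of already-established results, but rather the bookkeeping step of confirming that the \emph{$\psi$-versions} match up with the \emph{$\mathscr{P}$-versions} in Theorem~\ref{Th4.14}. Once the Mandelker equivalence between pseudocompactness and boundedness is invoked (both at the level of supports for $C_\mathscr{P}(X)=C_\psi(X)$ and at the level of the sets $A_n(f)$ for $C_\infty^\mathscr{P}(X)=C_\infty^\psi(X)$), the Corollary falls out immediately.
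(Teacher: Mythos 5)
Your argument is correct and takes essentially the same route as the paper: Johnson's formula $C_\psi(X)=\bigcap_{p\in\beta X\setminus\upsilon X}M^p$ gives $m$-closedness, and Theorem~\ref{Th4.14}(\ref{Th4.14(3)}) applied with $\mathscr{P}$ the ideal of closed bounded sets (the identifications $C_\mathscr{P}(X)=C_\psi(X)$ and $C_\infty^\mathscr{P}(X)=C_\infty^\psi(X)$ being exactly the Mandelker-based rewriting the paper records before Theorem~\ref{Th4.12} and uses again in its final theorem) yields the closure formula, so equating the two finishes the proof. You merely make explicit the bookkeeping the paper leaves implicit.
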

	We conclude this section by establishing a characterization of pseudocompact spaces.
	\begin{theorem}
		The $U$-topology and the $m$-topology on $C(X)$ are equal if and only if the closures of $C_\mathscr{P}(X)$ in the respective topologies are equal for every choice of ideal $\mathscr{P}$ of closed sets in $X$. Therefore $X$ is pseudocompact if and only if for every choice of ideal $\mathscr{P}$ of closed sets in $X$, $C_\mathscr{P}(X)$ is dense in $C^\mathscr{P}_\infty(X)$ in the $m$-topology.
	\end{theorem}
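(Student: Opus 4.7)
The forward direction is immediate: if the $U$-topology coincides with the $m$-topology on $C(X)$, their closure operators agree on every subset of $C(X)$, and in particular on each $C_\mathscr{P}(X)$. For the converse, assume $cl_UC_\mathscr{P}(X) = cl_mC_\mathscr{P}(X)$ for every ideal $\mathscr{P}$ of closed sets in $X$. Theorem~\ref{Th4.13}(\ref{Th4.13(2)}) identifies the left side with $C_\infty^\mathscr{P}(X)$, while Theorem~\ref{Th4.14}(\ref{Th4.14(3)}) identifies the right side with $I_\mathscr{P}$. Since $I_\mathscr{P}\subseteq C_\infty^\mathscr{P}(X)$ in general (Theorem~\ref{Th4.14}(1)), the hypothesis is equivalent to the statement that $C_\infty^\mathscr{P}(X)$ is an ideal of $C(X)$ for every choice of $\mathscr{P}$.

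The plan for the converse is to show that this global condition forces $X$ to be pseudocompact. Suppose otherwise, so that some $h\in C(X)$ is unbounded. Put $\mathscr{P}_h=\{E\subseteq X:E\text{ closed and }h\text{ bounded on }E\}$; stability of the boundedness condition under finite unions and under passage to closed subsets makes $\mathscr{P}_h$ an ideal of closed sets. Now consider $f=1/(1+|h|)\in C^*(X)$. For every $n\in\mathbb{N}$, $A_n(f)=\{x\in X:|h(x)|\le n-1\}$, on which $h$ is bounded, so $A_n(f)\in\mathscr{P}_h$ and thus $f\in C_\infty^{\mathscr{P}_h}(X)$. However $|fh(x)|=|h(x)|/(1+|h(x)|)$, whence $A_n(fh)=\{x\in X:|h(x)|\ge 1/(n-1)\}$ for $n\ge 2$; since $h$ is unbounded on $X$ it remains unbounded on each such superlevel set, so $A_n(fh)\notin\mathscr{P}_h$ and therefore $fh\notin C_\infty^{\mathscr{P}_h}(X)$. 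Hence $C_\infty^{\mathscr{P}_h}(X)$ fails to be an ideal, contradicting the hypothesis. Therefore $X$ must be pseudocompact, and the Remark following Theorem~\ref{Th3.8} yields that the $U$-topology coincides with the $m$-topology on $C(X)$.

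The second equivalence in the statement follows as a direct corollary. Since $X$ is pseudocompact iff the $U$- and $m$-topologies coincide (Remark after Theorem~\ref{Th3.8}), the part just proved makes this equivalent to $cl_UC_\mathscr{P}(X)=cl_mC_\mathscr{P}(X)$ for every $\mathscr{P}$; by Theorem~\ref{Th4.13}(\ref{Th4.13(2)}) this in turn is equivalent to $cl_mC_\mathscr{P}(X)=C_\infty^\mathscr{P}(X)$, i.e., to $C_\mathscr{P}(X)$ being dense in $C_\infty^\mathscr{P}(X)$ in the $m$-topology. The main obstacle in the whole argument is producing the witness ideal $\mathscr{P}_h$ when $X$ fails to be pseudocompact; once it is in hand, the containments and non-containments needed to break the ideal property of $C_\infty^{\mathscr{P}_h}(X)$ are routine.
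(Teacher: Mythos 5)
Your proof is correct, but it takes a genuinely different route from the paper's. The paper argues the nontrivial direction by contraposition with a single fixed witness: it takes $\mathscr{P}$ to be the ideal of bounded closed subsets, so that $C_\mathscr{P}(X)=C_\psi(X)$, invokes Theorem~\ref{Th4.11} for $cl_UC_\psi(X)=\{f:f^*(\beta X\setminus\upsilon X)=\{0\}\}$, uses that $C_\psi(X)=\bigcap_{p\in\beta X\setminus\upsilon X}M^p$ is already $m$-closed, and then exhibits a fixed-point-free $f$ lying in the $U$-closure but not in $C_\psi(X)$ when $X$ fails to be pseudocompact; this is short but leans on the Stone--\v{C}ech/realcompactification machinery (and the existence of the claimed witness $f$ with $f^*(\beta X\setminus X)=\{0\}$ is the one step there that needs care). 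You instead translate the hypothesis, via Theorems~\ref{Th4.13}(\ref{Th4.13(2)}) and~\ref{Th4.14}, into the purely algebraic condition that $C_\infty^\mathscr{P}(X)=I_\mathscr{P}$, i.e.\ that $C_\infty^\mathscr{P}(X)$ is an ideal for every $\mathscr{P}$, and then defeat multiplicative absorption with the tailor-made ideal $\mathscr{P}_h$ of closed sets on which a given unbounded $h$ is bounded: the computations $A_n\bigl(1/(1+|h|)\bigr)=\{|h|\le n-1\}\in\mathscr{P}_h$ and $A_2\bigl(h/(1+|h|)\bigr)=\{|h|\ge 1\}\notin\mathscr{P}_h$ are exactly right (for $n=1$ the level set of $fh$ is empty, but you only need $n\ge 2$). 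Your argument is entirely internal to the $A_n$-calculus of Section~4, avoids $\beta X$ and $\upsilon X$ altogether, and also writes out the deduction of the second sentence explicitly, which the paper leaves implicit; the price is that it quantifies over a family of ideals $\mathscr{P}_h$ rather than exhibiting one canonical counterexample.
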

	\begin{proof}
		If these two topologies are unequal, then $X$ is not pseudocompact and so there exists $f\in C^*(X)$ such that $Z(f)=\emptyset$ and $f^*(\beta X\setminus X)=\{0\}$. Consider $\mathscr{P}$, the ideal of bounded subsets of $X$. Then $C_\mathscr{P}(X)=C_\psi(X)$ and by Theorem~\ref{Th4.11}, $cl_UC_\mathscr{P}(X)=\{f\in C(X):f^*(\beta X\setminus\upsilon X)=\{0\}\}$ and $cl_mC_\mathscr{P}(X)=C_\psi(X)$. Clearly $f\in cl_UC_\mathscr{P}(X)\setminus cl_mC_\mathscr{P}(X)$, i.e., $cl_UC_\mathscr{P}(X)\neq cl_mC_\mathscr{P}(X)$.
	\end{proof}
	
\end{document}